\newtheorem{theorem}{\bf Theorem}[section]
\newtheorem{proposition}[theorem]{\bf Proposition}
\newtheorem{remark}{\bf Remark}[section]
\newtheorem{lemma}[theorem]{\bf Lemma}
\newcommand{\beq}{\begin{equation}}
	\newcommand{\eeq}{\end{equation}}
\newcommand{\ben}{\begin{eqnarray}}
	\newcommand{\een}{\end{eqnarray}}
\newcommand{\beno}{\begin{eqnarray*}}
	\newcommand{\eeno}{\end{eqnarray*}}
\title[Transition threshold for 2-D Couette flow]{Transition threshold for the 2-D Couette flow in whole space via Green's function}
\author[Gaofeng Wang and  Weike Wang ]{\sc Gaofeng Wang$^{1}$, Weike Wang$^{2}$}
\address{$^1$ School of Mathematical Sciences , Shanghai Jiao Tong University, Shanghai, 200240,  P.R.China.}
\email{1462047796@qq.com}
\address{$^2$ School of Mathematical Sciences, CMA-Shanghai and Institute of Natural Science, Shanghai Jiao Tong University, Shanghai, 200240, P.R.China. }
\email{wkwang@sjtu.edu.cn}
\begin{document}
	\bibliographystyle{abbrv}
	\maketitle
	\vskip .01in
	\begin{center}
		\sc Abstract
	\end{center}
	
	\vskip .05in
	
	In this paper, we investigate the transition threshold problem concerning the 2-D Navier-Stokes equations in the context of Couette flow $(y,0)$ at high Reynolds number $Re$ in whole space. By utilizing Green's function estimates for the linearized equations around Couette flow, we initially establish refined dissipation estimates for the linearized Navier-Stokes equations with a precise decay rate $(1+t)^{-1}.$ As an application, 
we prove that if the initial perturbation of vorticity satisfies$$\|\omega_{0}\|_{H^{1}\cap L^1}\leq c_0\nu^{\frac{3}{4}}$$ for some small constant $c_0$ 
independent of the viscosity $\nu$, then we can reach the conclusion that the solution remains within $O\left( \nu ^{\frac{3}{4}}\right) $ of  the Couette  flow.

	\maketitle
	
	\titlecontents{section}[0pt]{\vspace{0\baselineskip}\bfseries}
	{\thecontentslabel\quad}{}%
	{\hspace{0em}\titlerule*[10pt]{$\cdot$}\contentspage}
	
	\titlecontents{subsection}[1em]{\vspace{0\baselineskip}}
	{\thecontentslabel\quad}{}%
	{\hspace{0em}\titlerule*[10pt]{$\cdot$}\contentspage}
	
	\titlecontents{subsubsection}[2em]{\vspace{0\baselineskip}}
	{\thecontentslabel\quad}{}%
	{\hspace{0em}\titlerule*[10pt]{$\cdot$}\contentspage}


	\vskip .3in

	\textbf{Key words:} Transition threshold; Couette  flow; Asymptotic stability; Green's function
	
	\textbf{MSC 2020:} 
	35Q30
	\section{Introduction}
	In this paper, we first consider the stability of Couette flow under the two-dimensional incompressible Navier-Stokes  equations in whole space $(x,y)\in \mathbb{R}^2$
	\begin{equation}\label{eq:1}
		\left\{
		\begin{aligned}
			&\partial _tv+v\cdot\nabla v-\nu \Delta v+\nabla q=0,\\
			&\nabla\cdot v=0,\\
			&v(0,x,y)=v_{0}(x,y),
		\end{aligned}
		\right.
	\end{equation}
where $\nu$  is the viscosity coefficient. Here, the unknowed  $v(t,x,y)=(v^1(t,x,y),v^2(t,x,y)),$ $q(t,x,y)$ denote the 2-D velocity field, pressure, respectively.

The  goal of this paper is to understand the
stability and large-time behavior of perturbations near the Couette  flow.  The Couette flow 
 $U_s=(y,0)$
is the  steady state solution of the \eqref{eq:1}.
Now, let's introduce the perturbation function
 $v=u+(y,0),$ then this converts the system of  \eqref{eq:1} into 
perturbation equations  which satisfies
 \begin{equation}\label{eq:2}
 	\left\{
 	\begin{aligned}
 		&\partial _tu+y\partial_x u+	\begin{pmatrix}
 			u^2\\
 		0
 		\end{pmatrix}-\nu \Delta u+u\cdot\nabla u+\nabla q=0,\\
 		&\nabla\cdot u=0,\\
 		&u(0,x,y)=u_{0}(x,y).\\
 	\end{aligned}
 	\right.
 \end{equation}
Due to the existence of the pressure term in system $\eqref{eq:2}$, there are obstacles to deal with the velocity field directly. 
By takng the curl  on both sides of $\eqref{eq:2}$, not only can the pressure term be eliminated, but also the vector equation can be transformed into a scalar equation.
 The vorticity $\omega$ and the
stream function $\varphi$	 are defined by
$$\omega=\nabla\times u=\partial_y u^1-\partial_x u^2,\quad u=\nabla^{\perp}\varphi=(\partial_y\varphi,-\partial_x \varphi),\quad \Delta \varphi=\omega,$$ respectively, which satisfies
\begin{equation}\label{eq:3}
	\left\{
	\begin{aligned}
		&\partial _t\omega+y\partial_x\omega -\nu \Delta \omega=-\nabla\cdot (u \omega),\\
		&u=\nabla^{\perp}\Delta^{-1}\omega,\\
	&\omega(0,x,y)=\omega_{0}(x,y).\\
		\end{aligned}
	\right.
\end{equation}
  The hydrodynamic stability at high Reynolds numbers has been a significant research topic ever since Reynold's famous experiment 
  (see \cite{O. Reynolds.1883}), in which fluid was pumped into a pipe under various conditions and he demonstrated that laminar flow becomes spontaneously unstable at sufficiently high Reynolds numbers. This experiment primarily focused on understanding the instability of laminar flow and its transition to turbulence. Numerous studies have been conducted to investigate how laminar flows become unstable and transition into turbulence (see \cite{ S.Orszag.1980,Schmid.2001,Yaglom.2012}). However, there is currently no evidence of any linear instability in three-dimensional pipe flows at finite Reynolds numbers (see \cite{Drazin.1982}). It should be noted that although nonlinear instability has been observed in Reynold's experiment (see \cite{O. Reynolds.1883}), and it is even more concerning that for laminar flows with unstable eigenvalues predicted by linear theory at high Reynolds numbers, experiments and computer simulations typically show different instabilities compared to those predicted by linear theory at lower Reynolds numbers (see \cite{Schmid.2001}). These phenomena are commonly referred to as subcritical or bypass transitions in fluid mechanics.
  
The understanding of these subcritical transitions has been the subject of numerous investigations (see \cite{Chapman.2002} and references therein). One proposed explanation, dating back to Kelvin (see \cite{Kelvin.1887}) posits that the basin of attraction for laminar flow shrinks as (see $\nu\rightarrow 0$), rendering the flow nonlinearly unstable under small yet finite perturbations. Consequently, a pivotal inquiry initially posed by Trefethen et al. (see \cite{Trefethen.1993}) is to examine the transition threshold.
  The following mathematical version was formulated  by Bedrossian, Germain, and Masmoudi \cite{Bedrossian.2019}:
  
Given a norm $||\cdot||_X$, determine a $\gamma=\gamma(X)$ such that 
$$||u_0||_X<C \nu^{\gamma}\rightrightarrows stability,$$
$$||u_0||_X\gg C \nu^{\gamma}\rightrightarrows instability,$$
$\gamma$: the transition threshold in the applied literature.
The result of the threshold is intricately linked to topology, as an evident from the aforementioned definition of transition threshold.

After reviewing previous studies on the transition threshold of steady states, we would like to present our main result. There have been extensive researches in applied mathematics and physics (see \cite{Duguet.2010, Lundbladh.1994,S.Orszag.1980,Reddy.1998,Yaglom.2012}) dedicated to calculating transition threshold for various flows such as Couette flows and Poiseuille flows. Regarding Couette flow stability threshold problem, several works have been conducted (see \cite{Bedrossian.2020,Bedrossian.2017,Bedrossian.2016,Bedrossian.2018}).
When $\Omega=\mathbb{T}\times\mathbb{R}\times\mathbb{T},$ Bedrossian, Germain and Masmoudi (see \cite{Bedrossian.2020,Bedrossian.2022,Bedrossian.2017}) proved the transition threshold $\gamma= 1$ and $\gamma\leq\frac{3}{2}$  using the Fourier multiplier method if the perturbation for velocity belongs to the Gevrey class and Sobolev class respectively. This result has been further improved $\gamma\leq 1$ by Wei and Zhang (see \cite{Wei.2021} ) through resolvent estimates when the perturbation for velocity is in the Sobolev class $H^{2}$. The same result $\gamma\leq 1$ also holds in a finite channel with non-slip boundary conditions when the perturbation for velocity is in the Sobolev class $H^{2}$ when $\Omega=\mathbb{T}\times[-1,1]\times\mathbb{T}$ (see \cite{Chen.2020}). From this result, it can be inferred that non-slip boundary conditions contribute to the development of a boundary layer effect, but they are not the primary factor driving strong instability.
When $\Omega=\mathbb{T}\times \mathbb{R},$ the same transition threshold for the Couette is obtained $\gamma\leq\frac{1}{2}$ (see \cite{Bedrossian.2018,Masmoudi.2020}) if the perturbation for velocity is in Sobolev space  $H^2$ and in $H_x^{log}L_y^2$, respectively, which has been improved to $\gamma\leq\frac{1}{3}$ if the perturbation for vorticity is in Sobolev space   Sobolev space  $H^{\sigma}(\sigma\geq 40)$ (see \cite{Masmoudi.2022}) via the time-dependent multiplier method. The
	result $\gamma\leq \frac{1}{2}$ also holds in a finite channel with non-slip boundary condition  if the perturbation  for velocity is  in Sobolev class $H^{2}$ when $\Omega=\mathbb{T}\times[-1,1]$ (see \cite{L.2020}). This result implies that the boundary layer effect induced by non-slip
	boundary condition is not the main effect resulting in the strong instability. For the perturbation for vorticity in the Gevery class (see \cite{Bedrossian.2016,Li.2022}).
	
 Moreover, for 2D plane Poiseuille flow, when $\Omega=\mathbb{T}\times \mathbb{R},$ Coti Zelati et al. (see  \cite{Zelati.2020}) proved the transition threshold $\gamma\leq \frac{3}{4}+2\epsilon $  (for any $\epsilon>0$) via the hypocoercivity method,  which has been improved to $\gamma\leq \frac{2}{3}+$ by Del Zotto (see \cite{Zotto.2023}). Ding and Lin (see \cite{Ding.2022}) were to treat with the case
 in a finte channel $\mathbb{T}\times (-1,1)$ with Navier-slip boundary condition, which proved the transition threshold $\gamma\leq \frac{3}{4}$ based on the resolvent estimate and gived the enhnaced dissipation with decay rate $e^{-c\sqrt{\nu}t}.$ It is worth mentioning that they (see \cite{Ding.2024}) have recently improved their results to $\gamma\leq\frac{2}{3}$ for Navier-slip boundary condition  due to  the improvement of  the inviscid damping effect.
 
For the 3D case, Chen, Wei, and Zhang (see \cite{Chen.2023}) established the linear stability of 3D pipe Poiseuille flow through resolvent estimates. Recently, Chen, Ding et al. (see \cite{Chen.2024}) utilized the method of resolvent estimates to determine the transition threshold $\gamma\leq \frac{7}{4} $ for 3D plane Poiseuille flow in a finite channel $\mathbb{T}\times [-1,1]\times\mathbb{T}$. It is noteworthy that Wang and Xie (see \cite{Wang.2022}) demonstrated the structural stability of Hagen-Poiseuille flows in a pipe under steady conditions.

The stability of the system is attributed to two main mechanisms: dissipative enhancement and inviscid damping, with the latter being a consequence of mixing induced by the shear flow (see \cite{Bedrossian.2015,Ionescu.2019,Zhao.2020} for inviscid damping results in Euler equation). Due to diffusion effects, the decay rate of high frequency components is significantly faster than that of low frequency components. Energy transfer from low to high frequencies occurs through mixing, resulting in dissipative enhancement.
This phenomenon of enhanced dissipation  has been extensively observed and studied in physics literature (see \cite{Bernoff.1994,Latini.2001,Rhines.1983,Thomson.1887}). Recently, it has garnered significant attention from the mathematics community and substantial progress has been made. Constantin et al. were among the first to rigorously establish enhanced dissipation through diffusive mixing (see \cite{Constantin.2008}). Since then, numerous remarkable results have emerged. Notably, there has been intensive investigation into the stability of shear flows with respect to passive scale equations and Navier-Stokes equations in a series of outstanding papers  (see \cite{Bedrossian.2020,Bedrossian.2017,Bedrossian.2016,Bedrossian.2018,Chen.2020,Wei.2021}).
 
The observation reveals that prior research necessitates the periodicity of the $x$ direction and guarantees discrete frequencies. This requirement stems from the fact that one of the primary objectives of these studies is to estimate the spectrum of the operator within the linear component of the equation. The situation in this case is unequivocal, as the non-zero mode exhibits a distinct spectral gap that facilitates the capture of dissipative enhancement. The zero mode  typically possesses favorable properties contributing to stability. The enhanced dissipation effect in such a situation becomes ambiguous when considering $x\in \mathbb{R}$ without this requirement of periodicity of the $x$ direction, as the corresponding spectra are continuous and it is no longer possible to distinguish between zero and non-zero modes as previously. Nevertheless, when considering shear flow in whole space, such an approach becomes entirely invalid, demanding a completely novel and distinct idea. Consequently, addressing the stability problem associated with Cauchy problem for Navier-Stokes equations remains challenging and has been scarcely explored.

The noteworthy aspect is that Deng, Shi and Wang (see \cite{Deng.2023}) investigate the 3-D parabolic-parabolic and parabolic-elliptic Keller-Segel models with Couette flow in $\mathbb{R}^3$. They establish that the blow-up phenomenon of the solution can be suppressed by enhanced dissipation of large Couette flows. In this study, they employ Green's function method  which is different form the method of the periodic spatial variable $x$ to describe the enhanced dissipation through a more precise space-time structure and obtain global existence along with pointwise estimates of the solutions. The authors introduces a novel approach to suppressing the bursting of chemotactic model solutions by shear flow in whole space using the Green's function method. This classical method has been combined with modern micro-local analysis and harmonic analysis methods over the past two decades to study nonlinear equation solutions and obtain significant results (see \cite{Liu.2017,Liu.2010,Liu.2009,Liu.1998}). One advantage of utilizing Green's functions for discussing these partial differential equations is that it allows for a clear and localized understanding of how various mechanisms interact. The fine analysis in paper (see \cite{Deng.2023}) reveals that the Gauss kernel with Couette flow effectively suppresses solution blow up  through the utilization of a large constant. Taking inspiration from this paper, our aim is to explore innovative approaches in discussing the reinforcing effect of shear flow on the dissipative mechanism through the utilization of Green's function. This involves investigating the stability of shear flows at high Reynolds number.

The stability of Couette flow under the 2-D incompressible Navier-Stokes equations in whole space is examined in this paper. The difficulty caused by $x\in \mathbb{R}$ is overcome by employing the Green's function method, which enables us to obtain a comprehensive and precise description of the solution.
 Our main result is stated as follows:

\begin{theorem}\label{thm1}
	Suppose  that $\omega_0\in H^1(\mathbb{R}^2)\cap L^{1}(\mathbb{R}^2)$. There exists constants $\nu_0$  and $c_0, C> 0$ independent  of $\nu$  so that if 
\begin{equation}
\|\omega_{0}\|_{H^{1}\cap L^1}\leq c_0\nu^{\frac{3}{4}}
\end{equation}
	for some sufficiently small $c_0,0<\nu\leq \nu_0,$ 
then the solution of the system \eqref{eq:3} is global in time and satisfies the subsequent stability estimate 
	\begin{equation}
\|\omega\|_{L^\infty L^2}\leq Cc_0\nu^{\frac{3}{4}},
\end{equation}
as well as the decay estimate 
\begin{equation}
	\|\omega(t)\|_{ L^2}\leq Cc_0(1+t)^{-1}\nu^{\frac{3}{4}}.
\end{equation}
\end{theorem}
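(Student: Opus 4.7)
The $L^\infty L^2$ bound in the theorem is essentially immediate from the energy identity: pairing the vorticity equation with $\omega$, using $\nabla\cdot u=0$ together with the skew-symmetry of the transport term $y\partial_x$ on $L^2$, one obtains
\begin{equation*}
\tfrac{1}{2}\tfrac{d}{dt}\|\omega\|_{L^2}^2 + \nu\|\nabla\omega\|_{L^2}^2 = 0,
\end{equation*}
which already yields $\|\omega\|_{L^\infty L^2}\le\|\omega_0\|_{L^2}\le c_0\nu^{3/4}$; a parallel differential inequality for $\|\nabla\omega\|_{L^2}$, closed by absorbing the nonlinearity via smallness once the $L^2$ decay is known, propagates $H^1$ regularity and thus global existence. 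The substantial part of the theorem is therefore the dispersive decay $\|\omega(t)\|_{L^2}\lesssim c_0\nu^{3/4}(1+t)^{-1}$.

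I would prove this by a continuity/bootstrap argument on the Duhamel formulation
\begin{equation*}
\omega(t) = e^{-tL}\omega_0 - \int_0^t e^{-(t-s)L}\nabla\cdot(u\omega)(s)\,ds,\qquad L := y\partial_x - \nu\Delta,
\end{equation*}
with bootstrap quantity $\mathcal{E}(T)=\sup_{0\le t\le T}(1+t)\nu^{-3/4}\|\omega(t)\|_{L^2}$, and show that $\mathcal{E}(T)\le 2Cc_0$ improves to $\mathcal{E}(T)\le Cc_0$ for $c_0$ small. The linear term is bounded directly by the refined Green's function dissipation estimate established earlier in the paper, giving $\|e^{-tL}\omega_0\|_{L^2}\lesssim (1+t)^{-1}\|\omega_0\|_{H^1\cap L^1}\le c_0\nu^{3/4}(1+t)^{-1}$. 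For the nonlinear term I would move the divergence onto the propagator via integration by parts and invoke the companion Green's function bound for $\nabla e^{-(t-s)L}$ in a suitable mixed-norm space; the factor $u\omega$ is then estimated via Hölder together with the two-dimensional Biot-Savart inequality $\|u\|_{L^p}\lesssim\|\omega\|_{L^q}$, $\tfrac{1}{q}=\tfrac{1}{p}+\tfrac{1}{2}$, $1<q<2$, using conservation-type control on $\|\omega\|_{L^1}$ afforded by the divergence form of the equation. The resulting bound is quadratic in $\mathcal{E}(T)$, so after normalization by $\nu^{3/4}$ an extra factor of $c_0\nu^{3/4}$ appears, which closes the bootstrap.

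The principal obstacle is that on whole space the continuous $x$-spectrum prohibits the mode-by-mode spectral-gap analysis available on $\mathbb{T}\times\mathbb{R}$, so the $(1+t)^{-1}$ rate must be read off from the explicit Green's function rather than a resolvent or Fourier-multiplier argument. Within the nonlinear step, the delicate point is to reconcile the short-time $(t-s)^{-1/2}$-type singularity of $\nabla e^{-(t-s)L}$ with the endpoint failure of 2D Biot-Savart at $L^1(\mathbb{R}^2)$, which is why the precise mixed-norm Green's function estimates of the preceding section—rather than naked semigroup bounds—are required to render the time integral uniformly convergent. Verifying that the final kernel estimates produce exactly the powers of $\nu$ compatible with the threshold $\nu^{3/4}$ is the last bookkeeping step, but this is forced by the bilinearity of the nonlinearity and the scaling built into the linear estimate.
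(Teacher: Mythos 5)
Your overall architecture coincides with the paper's: a bootstrap on the weighted quantity $\sup_t(1+t)\|\omega(t)\|_{L^2}$, Duhamel with the explicit Marcus--Press Green's function $\mathbb{G}$, the linear term handled by Young's inequality using the $L^1$ and $L^2$ kernel bounds of Lemma \ref{lem3.1} (the two cases $\|\mathbb{G}\|_{L^2}\|\omega_0\|_{L^1}$ and $\|\mathbb{G}\|_{L^1}\|\omega_0\|_{L^2}$ combining to $(1+t)^{-1}\epsilon$), and the nonlinearity treated by moving $\nabla\cdot$ onto the kernel and invoking the gradient estimates of Lemmas \ref{lem3.2}--\ref{lem3.3}. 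Two of your choices genuinely differ. First, you obtain the $L^\infty L^2$ stability bound directly from the energy identity $\frac{1}{2}\frac{d}{dt}\|\omega\|_{L^2}^2+\nu\|\nabla\omega\|_{L^2}^2=0$; the paper instead extracts it from the decay estimate. Your observation is correct and cheaper for that part of the statement. Second, for the bilinear factor you use Biot--Savart $\|u\|_{L^p}\lesssim\|\omega\|_{L^q}$ together with the conserved $L^1$ norm of $\omega$, whereas the paper uses Gagliardo--Nirenberg in the form $\|u\|_{L^p}\lesssim\|u\|_{L^2}^{\theta}\|\nabla^2u\|_{L^2}^{1-\theta}$, which tacitly requires control of $\|u\|_{L^2}$ and of $\|\omega\|_{H^1}$ along the flow; your route through $L^1\cap L^2$ interpolation of $\omega$ is arguably the more self-contained way to bound $\|u\|_{L^p}$ for $2<p<\infty$ on $\mathbb{R}^2$.

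The weak point is that you relegate to ``bookkeeping'' precisely the step where the exponent $3/4$ is produced. No single choice of Lebesgue exponent in the gradient kernel estimate closes the Duhamel integral: for $p$ near $1$ the singularity $(t-s)^{-(1-1/p)-1/2}$ is integrable at $s=t$ but the enhanced-dissipation factor $\bigl(1+\nu^{-2}(t-s)^2\bigr)^{-\frac{1}{2}(1-\frac{1}{p})}$ yields too few powers of $\nu$, while for larger $p$ one gains $\nu$ but the singularity at $s=t$ becomes non-integrable. The paper resolves this by splitting the time axis at the balance point $t\sim\nu^{1/2}$ and, for $t\geq 2\nu^{1/2}$, splitting the Duhamel integral into $[t-\nu^{1/2},t]$, $[t/2,t-\nu^{1/2}]$ and $[0,t/2]$ with \emph{different} exponents on each piece ($p_1=10/9$ near $s=t$, $p_2=5/3$ in the middle, and a pair of interpolated estimates on $[0,t/2]$); the outcome $\nu^{-1/p_i+2\theta/p_i-3\theta/2}=\nu^{-3/4}$ with $\theta=1/2$ is exactly what matches the threshold and lets the quadratic term be absorbed by $\epsilon\leq c_0\nu^{3/4}$. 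Your proposal acknowledges the tension between the short-time singularity and the decay factor but does not supply this decomposition, so as written it does not yet demonstrate that the bootstrap closes with the claimed power of $\nu$; that decomposition is the content of the theorem rather than a verification at the end.
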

\begin{remark}
	To the best of our knowledge, Theorem \ref{thm1} represents the first finding on the transition threshold for Couette flow in the whole space. Asymptotic stability is guaranteed for initial perturbations that satisfy
$$\|\omega_{0}\|_{H^{1}\cap L^1}\leq c_0\nu^{\frac{3}{4}}.$$
\end{remark}
\begin{remark}
	Our main results are presented in Theorem \ref{thm1}, which demonstrate an enhanced dissipation with a decay rate of $\left( 1+t\right) ^{-1}$ and identify the corresponding transition threshold $\gamma\leq \frac{3}{4}$. In comparison to (see \cite{Bedrossian.2020,Bedrossian.2022,L.2020,Masmoudi.2022}), our objective is to address the case of the entire space, and our approach significantly differs from (see \cite{Bedrossian.2020,Bedrossian.2022,L.2020,Masmoudi.2022}). Our proof relies on the Green's function method that yields the enhanced dissipation of $\left( 1+t\right) ^{-1}$ and subsequently determines the transition threshold $\gamma\leq \frac{3}{4}$.
\end{remark}
\begin{remark}
	The stability of the Couette flow on \eqref{eq:3} is achievable due to the augmented dissipation facilitated by the non-self-adjoint operator $y\partial_x-\nu\Delta$, which represents the linear component of system \eqref{eq:3}. Operators of this nature have been extensively investigated by Hörmander (see \cite{Albritton.2022, Hörmander.1967}). In the case of the standard heat equation $\partial_t f-\nu\Delta f=0,$ its decay rate is  $t^{-1/2}$ ; however, for the drift diffusion equation $$\partial_t f+y\partial_x f-\nu\Delta f=0,$$ its decay rate is significantly faster at $t^{-1}$ compared with $t^{-1/2}.$ This intensified dissipation effect plays a crucial role in addressing the stability problem under consideration.
\end{remark}
\begin{remark}
	The transition threshold $\gamma\leq \frac{3}{4}$  may not be optimal, necessitating more precise estimations to enhance the outcomes.
	\end{remark}
	The utilization of Green's function in this paper is non-classical due to the presence of variable coefficients in the linear components of equation \eqref{eq:3}. Specifically, the variable coefficient $y$ plays a crucial role in enhancing dissipation, which distinguishes it from previous cases. Therefore, one must not only overcome challenges posed by these variable coefficients but also harness their advantages. The construction of Green's function for linearized equations with variable coefficients is highly intricate and often relies heavily on the specific structures of these coefficients. For references on constructing Green's functions for shock profiles and viscous rarefaction waves, please (see \cite{Liu.1997,Liu.2017,Liu.2010,Liu.2009}). 
	The precise formula provided in (see \cite{Marcus.1977}) is employed in this study to calculate the Green's function associated with the heat equation within a background Couette flow in two dimensions. Further details can be found in Section \ref{sec3}.
	
In Section 2, we present a review of several important lemmas that will be utilized in this paper. In Section 3, the Green's function for the linearization equation is provided and certain properties of this function are estimated. Finally, in Section 4, we establish nonlinear stability by utilizing the well-established properties of Green's function discussed in the previous Section.

Let us give some notations to end this section. Throughout this paper, we denote the space-time norm 
$$\|f\|_{L^p(t)L^q(\mathbb{R}^2)}=\left\|||f||_{L^q\left( \mathbb{R}^2\right)} \right\|_{L^p(\mathbb{R}^+)}.$$
 We always denote by $C$ a positive constant independent of $\nu$. We
also use $a \sim b$ for $C^{-1}b\leq a\leq Cb$
and $a \lesssim b$ for $a \leq Cb $ for some constants $ C > 0$
independent of $  \nu.$
	\section{Preliminaries}	
In this section, we give the following lemmas, which will be used frequently in later proofs. Detailed proof comes  from (see \cite{Bollob.1993,Tao.2006}).
We begin with the following abstract bootstrap argument will be used to establish the global stability of  solution.

\begin{lemma}\label{lem2.1}
	(see \cite{Tao.2006}). Let $I$ be a time interval, and for each $t \in I$ suppose we have two statements, a "hypothesis" $\mathbf{H}(t)$ and a "conclusion" $\mathbf{C}(t)$. Suppose we can verify the following four assertions:

(a) (Hypothesis implies conclusion) If $\mathbf{H}(t)$ is true for some time $t \in I$, then $\mathbf{C}(t)$ is also true for that time $t$.

(b) (Conclusion is stronger than hypothesis) If $\mathbf{C}(t)$ is true for some $t \in I$, then $\mathbf{H}\left(t^{\prime}\right)$ is true for all $t^{\prime} \in I$ in a neighbourhood of $t$.

(c) (Conclusion is closed) If $t_1, t_2, \ldots$ is a sequence of times in $ I$ which converges to another time $t \in I$. and $\mathbf{C}\left(t_n\right)$ is true for all $t_n$, then $\mathbf{C}(t)$ is true.

(d) (Base case) $\mathbf{H}(t)$ is true for at least one time $t \in I$.

Then $\mathbf{C}(t)$ is true for all $t \in I$.

\end{lemma}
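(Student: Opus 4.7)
The plan is to prove this lemma by a standard continuity/connectedness argument on the interval $I$. Define the subset
$$\Omega = \{t \in I : \mathbf{C}(t) \text{ holds}\} \subseteq I,$$
and the goal is to show that $\Omega$ is non-empty, relatively open in $I$, and relatively closed in $I$. Since any time interval $I \subseteq \mathbb{R}$ is connected, these three properties together force $\Omega = I$, which is exactly the claimed conclusion ``$\mathbf{C}(t)$ is true for all $t \in I$.''

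For non-emptiness, I would combine (d) with (a): by (d) there exists some $t_0 \in I$ with $\mathbf{H}(t_0)$ true, and then (a) applied at $t_0$ upgrades this to $\mathbf{C}(t_0)$, so $t_0 \in \Omega$. For openness, I would pick an arbitrary $t \in \Omega$; since $\mathbf{C}(t)$ holds, assertion (b) supplies a neighbourhood $U \subseteq I$ of $t$ on which $\mathbf{H}$ holds pointwise, and a second application of (a), now at every $t' \in U$, yields $\mathbf{C}(t')$ throughout $U$, so $U \subseteq \Omega$. For closedness, suppose $\{t_n\} \subset \Omega$ with $t_n \to t \in I$; then by the definition of $\Omega$ each $\mathbf{C}(t_n)$ is true, and (c) directly gives $\mathbf{C}(t)$, hence $t \in \Omega$.

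Since $\Omega$ is a non-empty clopen subset of the connected space $I$, one concludes $\Omega = I$, which finishes the proof. The argument is essentially formal, and I do not anticipate any substantive obstacle: the four hypotheses are tailored exactly to deliver the three ingredients (non-emptiness, openness, closedness) required by the connectedness argument. The only technical point worth flagging is that the neighbourhood in (b) and the sequential limit in (c) are to be interpreted with respect to the subspace topology on $I$, so that no difficulty arises when $t$ lies at an endpoint of $I$ (should $I$ be half-open or closed). In the intended applications $I$ will be a time interval of the form $[0,T]$ or $[0,T^\ast)$ with $\mathbf{H}(t)$ a bootstrap bound and $\mathbf{C}(t)$ an improved version of it, so this topological subtlety is harmless.
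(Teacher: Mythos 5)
Your proof is correct and is essentially the standard connectedness argument (nonempty, open via (b)+(a), closed via (c)) that underlies the cited result in Tao's book; the paper itself does not reprove the lemma but simply refers to \cite{Tao.2006}, so there is nothing to add.
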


	





	The subsequent lemma presents the Young's inequality for the kernel form.
\begin{lemma}\label{lem2.2}(Young's inequality )  Let $1 \leq p, q, r \leq \infty$ and $1+\frac{1}{r}=\frac{1}{q}+\frac{1}{p}.$ If the kernel $K(z,z')$ is a measurable function on $\mathbb{R}^d\times\mathbb{R}^d$ and satisfies
	$$
	\left\|K(\cdot,z')\right\|_{L^q}\leq A,\quad	\left\|K(z,\cdot)\right\|_{L^q}\leq B,
	$$
 we define  integral operator
	$$
	Tf(z)=\int_{\mathbb{R}^d}K(z,z')f(z')dz',
	$$
	then it holds that
	 $$
	 	\left\|	Tf(z)\right\|_{L^r}\leq C	\left\|	f\right\|_{L^p},
	 	$$
	 for	$f(z)\in L^{p}(\mathbb{R}^d),$	where $C=\max\left\lbrace A,B\right\rbrace$.
	Moreover, we have a much finer estimate:
	 $$
	\left\|	Tf(z)\right\|_{L^r}\leq A^{\frac{q}{r}}	B^{q-\frac{q}{p}}\left\|f\right\|_{L^p}.
	$$
	\end{lemma}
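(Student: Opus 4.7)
The plan is to deduce both estimates from a single three-factor H\"older decomposition of the integrand defining $Tf(z)$, followed by Fubini and the two kernel hypotheses. The refined inequality $\|Tf\|_{L^r}\leq A^{q/r}B^{q-q/p}\|f\|_{L^p}$ will be proved first; the coarser inequality with constant $C=\max\{A,B\}$ then drops out immediately since $A^{q/r}B^{1-q/r}\leq\max\{A,B\}$, and the scaling relation $1+1/r=1/p+1/q$ forces $q-q/p=1-q/r$, so the two forms of the exponent of $B$ agree.

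I start from the pointwise bound $|Tf(z)|\leq\int|K(z,z')||f(z')|\,dz'$ and factor the integrand as
\[
|K(z,z')||f(z')| = |K(z,z')|^{1-q/r}\cdot |f(z')|^{1-p/r}\cdot\bigl(|K(z,z')|^{q}|f(z')|^{p}\bigr)^{1/r}.
\]
The three factors will be placed in $L^{\alpha}_{z'}$, $L^{\beta}_{z'}$, $L^{\gamma}_{z'}$ with $\alpha = q/(1-q/r)$, $\beta = p/(1-p/r)$, $\gamma = r$; a short computation using $1+1/r=1/p+1/q$ verifies $1/\alpha+1/\beta+1/\gamma = (1/q-1/r)+(1/p-1/r)+1/r = 1$, so three-factor H\"older is legal. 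The first factor contributes $\|K(z,\cdot)\|_{L^q}^{1-q/r}\leq B^{1-q/r}$ by the second hypothesis, the second contributes $\|f\|_{L^p}^{1-p/r}$, and the third leaves the mixed quantity $\bigl(\int|K(z,z')|^q|f(z')|^p\,dz'\bigr)^{1/r}$.

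Next I raise the resulting pointwise estimate for $|Tf(z)|$ to the $r$-th power, integrate in $z$, and apply Fubini to obtain
\[
\|Tf\|_{L^r}^{r} \;\leq\; B^{r-q}\|f\|_{L^p}^{r-p}\int|f(z')|^{p}\,\|K(\cdot,z')\|_{L^q}^{q}\,dz' \;\leq\; A^{q}B^{r-q}\|f\|_{L^p}^{r},
\]
the last step using the first kernel hypothesis $\|K(\cdot,z')\|_{L^q}\leq A$. Taking $r$-th roots and rewriting $(r-q)/r=1-q/r=q-q/p$ via the constraint yields the refined bound; bounding $A,B\leq C$ inside $A^{q/r}B^{1-q/r}\leq C$ gives the coarser bound.

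The main obstacle is purely bookkeeping: verifying that the three-factor H\"older exponents $\alpha,\beta,\gamma$ are admissible, i.e.\ each lies in $[1,\infty]$ under the constraint $1+1/r=1/p+1/q$ (this reduces to $r\geq p$ and $r\geq q$, both of which follow from $p,q\geq 1$), and checking that the two displayed forms of the exponent of $B$ genuinely coincide. The degenerate endpoint cases in which $p$, $q$, or $r$ equals $1$ or $\infty$ should be treated separately: for $r=\infty$ the estimate collapses to a single H\"older application in $z'$, while $p=1$ or $q=1$ reduce one of the three factors to a constant and the argument simplifies to a two-factor H\"older. These are routine and introduce no genuine difficulty.
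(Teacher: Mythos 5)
Your proof is correct and complete: the three-factor H\"older splitting $|K||f|=|K|^{1-q/r}|f|^{1-p/r}\bigl(|K|^{q}|f|^{p}\bigr)^{1/r}$ with exponents $q/(1-q/r)$, $p/(1-p/r)$, $r$ is the standard interpolation argument for this kernel form of Young's inequality, the exponent bookkeeping ($1-q/r=q-q/p$, admissibility via $r\geq p$ and $r\geq q$) checks out, and the coarse bound with $C=\max\{A,B\}$ follows as you say. For comparison: the paper offers no proof of this lemma at all, deferring to the cited references at the start of Section~2, so there is nothing to measure your argument against; yours is the self-contained proof one would expect to find there.
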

\section{Green's function }\label{sec3}
In this section, we will primarily equip ourselves with analytical tools for nonlinear stability in Section \ref{sec4}. These tools comprise fundamental estimates for Green's functions and lemmas for nonlinear stability. To fully utilize the precise formula of Green's function for 2-D as presented in \cite{Marcus.1977}, some modifications are required to our original equation. The construction of the Green's function for system \eqref{eq:3} involves time scaling $t\rightarrow\dfrac{t }{\nu},$ and rewrite the system as follows
\begin{equation}\label{eq:7}
	\left\{
	\begin{aligned}
		&\partial _t\omega+\frac{1}{\nu}y\partial_x\omega -\Delta \omega=-\frac{1}{\nu}\nabla\cdot (u \omega),\\
		&u=\nabla^{\perp}\Delta^{-1}\omega,\\
		&\omega(0,x,y)=\omega_{0}(x,y).\\
	\end{aligned}
	\right.
\end{equation}
The first step involves the consideration of the Green's function for the system \eqref{eq:7}
\begin{equation}\label{eq:8}
	\left\{
	\begin{aligned}
		&\partial _t\mathbb{G}+\frac{1}{\nu}y\partial_x\mathbb{G}-\Delta \mathbb{G}=0,\\
		&\mathbb{G}(x,y,0;y')=\delta(x,y-y').\\
	\end{aligned}
	\right.
\end{equation}
Here, we can write the $\mathbb{G}$ as folows
\begin{equation}\label{eq:9}
\begin{aligned}
\mathbb{G}\left(x, y, t; y'\right)&=\frac{1}{4 \pi t}\left(1+\frac{1}{12} \nu^{-2} t^2\right)^{-1 / 2}\cdot\\ 
&\exp \left(\frac{-\left(x-\frac{ t}{2\nu}\left(y+y'\right)\right)^2-\left(1+\frac{1}{12} \nu^{-2} t^2\right)\left(y-y'\right)^2}{4 t\left(1+\frac{1}{12}  \nu^{-2} t^2\right)}\right),
\end{aligned}
\end{equation}
where we use the precise formula given in (see \cite{Marcus.1977}) of Green’s function for the heat equation in the background of a
2-D Couette flow. A more streamlined version of Green's function is presented in \cite{Deng.2023}.
 Compared with the classical heat kernel in 2D $$\mathbb{H}(x, y, t) \equiv \frac{1}{4 \pi t} \exp \left(\frac{-x^2-y^2}{4 t}\right),$$ there is an extra factor $\left(1+\frac{1}{12}\nu^{-2} t^2\right)^{-1 / 2}$ in $\mathbb{G}\left(x, y, t ; y'\right)$ given by \eqref{eq:9}. This factor is an evidence of dissipation enhancement in $L^p (p>1)$ norm and also suggests that in the linear level for long time the solution should become sufficiently small since
$$
\left(1+\frac{1}{12} \nu^{-2} t^2\right)^{-1 / 2} \ll 1, \quad \text { for } t\gg\nu .
$$
In the nonlinear regime, the time evolution of the solution becomes more intricate due to the presence of nonlinear effects. It requires a longer duration for the solution to sufficiently diminish. An intriguing aspect in analyzing nonlinear  singularity at $t=s$  in time coincides with an additional decaying factor$\left(1+\frac{1}{12} \nu^{-2} t^2\right)^{-1 / 2}$, resulting in non-integrable singularities appearing in the nonlinear terms. We address this issue by making another crucial observation that $t=O\left(\nu^{1 / 2}\right)$ represents a balance point between these two effects: the singularity in time and the extra decaying factor. Accordingly, we describe the solution within different temporal regions based on this balancing point (see Section \ref{sec4}). The utilization of Green's function method employed in this study is proved to be a powerful tool for providing a clear depiction of space-time structures within our solution. The detailed information obtained here enables us to capture enhanced dissipation effects across all frequencies. Unfortunately, Green's functions for dissipative equations involving other shear flows remain unclear at present. However, we hope that our developed methodology can shedlight on such cases as well, leaving them open for future investigation.

 
 The following is an estimation of the Green's function, which plays a pivotal role in our proof.

\begin{lemma}\label{lem3.1}
The following estimates for the Green's function are available for $p\geq 1$
\begin{equation}
	\begin{aligned}
&	\|\mathbb{G}(x-\cdot,y,t;\cdot)\|_{L^p}\leq Ct^{-(1-\frac{1}{p})}\left(1+\frac{1}{12} \nu^{-2} t^2\right)^{-\frac{1}{2}(1-\frac{1}{p})},\\
&\|\mathbb{G}(\cdot-x',\cdot,t;y')\|_{L^p}\leq Ct^{-(1-\frac{1}{p})}\left(1+\frac{1}{12} \nu^{-2} t^2\right)^{-\frac{1}{2}(1-\frac{1}{p})}.
	\end{aligned}
\end {equation}
\end{lemma}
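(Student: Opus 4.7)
The plan is to reduce both $L^p$ estimates to a direct Gaussian integral computation by introducing a shear-compatible change of variables that diagonalizes the quadratic form in the exponent of \eqref{eq:9}. Setting $a=t$ and $b=1+\tfrac{1}{12}\nu^{-2}t^2$ for brevity, the Green's function takes the form
\begin{equation*}
\mathbb{G}(x-x',y,t;y')=\frac{1}{4\pi a\,b^{1/2}}\exp\!\left(\frac{-\bigl(x-x'-\tfrac{t(y+y')}{2\nu}\bigr)^{2}-b(y-y')^{2}}{4ab}\right),
\end{equation*}
by translation invariance in the first coordinate.

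For the first bound, I would introduce $\xi=x-x'-\tfrac{t(y+y')}{2\nu}$ and $\eta=y-y'$ as new integration variables in place of $(x',y')$. This is a triangular (shear) transformation with unit Jacobian, and it completely decouples the two quadratic terms: the exponent becomes $-\xi^{2}/(4ab)-\eta^{2}/(4a)$. Hence $\mathbb{G}$ factors as a product of two one-dimensional Gaussians with variances proportional to $tb$ and $t$ respectively. Raising to the $p$-th power and applying $\int_{\mathbb{R}}e^{-p s^{2}/(4c)}\,ds=\sqrt{4\pi c/p}$ separately in $\xi$ and $\eta$ gives
\begin{equation*}
\|\mathbb{G}(x-\cdot,y,t;\cdot)\|_{L^{p}}^{p}=\frac{1}{(4\pi t)^{p}b^{p/2}}\cdot\sqrt{\frac{4\pi tb}{p}}\cdot\sqrt{\frac{4\pi t}{p}}=\frac{(4\pi t)^{1-p}}{p}\,b^{(1-p)/2}.
\end{equation*}
Taking $p$-th roots yields exactly the claimed bound
\begin{equation*}
\|\mathbb{G}(x-\cdot,y,t;\cdot)\|_{L^{p}}\leq C\,t^{-(1-1/p)}\left(1+\tfrac{1}{12}\nu^{-2}t^{2}\right)^{-\tfrac{1}{2}(1-1/p)}.
\end{equation*}

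For the second bound I would repeat the same argument but now treat $(x,y)$ as the integration variables with $(x',y')$ fixed, using the analogous substitution $\xi=x-x'-\tfrac{t(y+y')}{2\nu}$, $\eta=y-y'$, whose Jacobian with respect to $(x,y)$ is still $1$. The integrand is identical, so the computation is word-for-word the same and produces the matching estimate.

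I do not anticipate a genuine obstacle here; the entire content of the lemma is the observation that the shift by $\tfrac{t(y+y')}{2\nu}$ in the $x$-direction diagonalizes the quadratic form, turning $\mathbb{G}$ into a tensor product of two independent Gaussians whose normalizations absorb precisely the extra factor $b^{-1/2}$. The only minor subtlety is to verify that the two change-of-variable maps (one in $(x',y')$, one in $(x,y)$) both have Jacobian $1$, which is immediate since each is a triangular transformation. The estimate then extends to $p=1$ by a direct $L^{1}$ computation (Gaussian mass equals $1$, giving the trivial bound $\|\mathbb{G}\|_{L^{1}}=1$) and to $p=\infty$ by reading off the supremum from the explicit formula, both consistent with the stated bound.
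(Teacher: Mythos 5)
Your proposal is correct and is essentially the paper's own argument: the paper performs the same Gaussian computation by Fubini, integrating first in $x'$ for fixed $y'$ (which is exactly your shear substitution done one variable at a time) and then in $y'$, obtaining the identical powers of $t$ and of $1+\tfrac{1}{12}\nu^{-2}t^{2}$. The second bound is likewise handled in the paper by the symmetric computation in $(x,y)$, just as you describe.
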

\begin {proof}
The expression of the Green's function in \eqref{eq:9} indicates that 
\[
	\begin{aligned}
&	\|\mathbb{G}(x-\cdot,y,t;\cdot)\|_{L^p}^p\\
&=\left( \frac{1}{4 \pi t}\right) ^p\left(1+\frac{1}{12} \nu^{-2} t^2\right)^{-p / 2} \cdot\\
& \int_{\mathbb{R}^2}\exp \left(\frac{-p\left(x-x'-\frac{ t}{2\nu}\left(y+y'\right)\right)^2}{4 t\left(1+\frac{1}{12}  \nu^{-2} t^2\right)}\right) \exp \left( \dfrac{-p(y-y')^2}{4t}\right) dx'dy'\\
&=\left( \frac{1}{4 \pi t}\right) ^p\left(1+\frac{1}{12} \nu^{-2} t^2\right)^{-p / 2} \cdot\\
& \int_{\mathbb{R}} \exp \left( \dfrac{-p(y-y')^2}{4t}\right)dy'\int_{\mathbb{R}}\exp \left(\frac{-p\left(x-x'-\frac{ t}{2\nu}\left(y+y'\right)\right)^2}{4 t\left(1+\frac{1}{12}  \nu^{-2} t^2\right)}\right) dx'\\
&\leq C t^{-(p-1)}\left(1+\frac{1}{12} \nu^{-2} t^2\right)^{-p / 2+\frac{1}{2}},
	\end{aligned}
\]
therefore, we have obtained
$$\|\mathbb{G}(x-\cdot,y,t;\cdot)\|_{L^p}\leq Ct^{-(1-\frac{1}{p})}\left(1+\frac{1}{12} \nu^{-2} t^2\right)^{-\frac{1}{2}(1-\frac{1}{p})}.$$
The procedure is identical to that of
$$\|\mathbb{G}(\cdot-x',\cdot,t;y')\|_{L^p}\leq Ct^{-(1-\frac{1}{p})}\left(1+\frac{1}{12} \nu^{-2} t^2\right)^{-\frac{1}{2}(1-\frac{1}{p})}.$$
This completes the proof of the Lemma \ref{lem3.1}.
\end{proof}
The subsequent proof will also employ an estimates of the derivative of Green's function. The estimation of the first derivative of Green's function suffices in this paper due to the requirement for evidence. 
The Green's function, derived from the expression \eqref{eq:9}, can be regarded as a kernel function. However, when it operates on a function, it no longer assumes the simple convolution form and lacks symmetry in relation to the variables $(x,y)$ and $(x',y').$ The Green function exhibits distinct characteristics between the variables $(x,x')$ and $(y,y').$  The estimation of the derivative is required for both $(x,x') $ and $(y,y'). $ The initial step involves estimating the derivative with respect to $(x,x').$
	\begin{lemma}\label{lem3.2}
			The following derivative estimate for the Green's function is at our disposal for $p\geq 1$
			\begin{equation}\label{eq:11}
				\begin{aligned}
					&	\|\partial_{x,x'}\mathbb{G}(x-\cdot,y,t;\cdot)\|_{L^p}\leq Ct^{-(1-\frac{1}{p})-\frac{1}{2}}\left(1+\frac{1}{12} \nu^{-2} t^2\right)^{-\frac{1}{2}(1-\frac{1}{p})-\frac{1}{2}},\\
		&	\|\partial_{x,x'}\mathbb{G}(\cdot-x',\cdot,t;y')\|_{L^p}\leq Ct^{-(1-\frac{1}{p})-\frac{1}{2}}\left(1+\frac{1}{12} \nu^{-2} t^2\right)^{-\frac{1}{2}(1-\frac{1}{p})-\frac{1}{2}}.
				\end{aligned}
			\end{equation}
			\end{lemma}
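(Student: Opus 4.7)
The approach is to mimic the calculation used for Lemma \ref{lem3.1}, inserting the polynomial prefactor that comes from differentiation in $x$ or $x'$. A direct differentiation of the explicit formula \eqref{eq:9} gives
\[
\partial_x \mathbb{G}(x-x',y,t;y') = -\,\frac{x-x'-\frac{t}{2\nu}(y+y')}{2t\bigl(1+\frac{1}{12}\nu^{-2}t^2\bigr)}\,\mathbb{G}(x-x',y,t;y'),
\]
and since the Green's function depends on $x,x'$ only through $x-x'$, we immediately get $\partial_{x'}\mathbb{G} = -\partial_x \mathbb{G}$. Therefore it suffices to estimate one of them; the other is identical.

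The plan is then to compute the $L^p$ norm in $(x',y')$ (the argument for the other slice in $(x,y)$ is symmetric in structure). After raising the absolute value of $\partial_x \mathbb{G}$ to the $p$-th power, the integrand splits as
\[
|\partial_x \mathbb{G}|^p
= \Bigl(\tfrac{1}{4\pi t}\Bigr)^{p}\bigl(1+\tfrac{1}{12}\nu^{-2}t^2\bigr)^{-p/2}\,\frac{|\xi|^p}{\bigl[2t(1+\tfrac{1}{12}\nu^{-2}t^2)\bigr]^p}\exp\!\Bigl(\tfrac{-p\,\xi^2}{4t(1+\tfrac{1}{12}\nu^{-2}t^2)}\Bigr)\exp\!\Bigl(\tfrac{-p(y-y')^2}{4t}\Bigr),
\]
where $\xi:=x-x'-\tfrac{t}{2\nu}(y+y')$. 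For fixed $y'$, I would change variables $x'\mapsto \xi$ (with unit Jacobian), so the $\xi$-integral and the $y'$-integral decouple by Fubini.

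Both one-dimensional integrals are then standard Gaussians: the $y'$-integration yields a factor of order $t^{1/2}$, while the polynomial-weighted Gaussian gives
\[
\int_{\mathbb{R}} |\xi|^p \exp\!\Bigl(\tfrac{-p\,\xi^2}{4t(1+\tfrac{1}{12}\nu^{-2}t^2)}\Bigr)\,d\xi
\leq C\bigl[t\bigl(1+\tfrac{1}{12}\nu^{-2}t^2\bigr)\bigr]^{(p+1)/2}.
\]
Collecting the powers of $t$ and of $1+\tfrac{1}{12}\nu^{-2}t^2$ and then taking the $p$-th root produces exactly the exponents $-(1-\tfrac{1}{p})-\tfrac{1}{2}$ and $-\tfrac{1}{2}(1-\tfrac{1}{p})-\tfrac{1}{2}$ claimed in \eqref{eq:11}. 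For the second inequality, integrating in $(x,y)$ instead of $(x',y')$ replaces $x'$ by $x$ in the substitution and $y'$ by $y$ in the second Gaussian; the structure is symmetric and the same bookkeeping applies.

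I do not expect any serious obstacle: the main observation is that each $x$- (or $x'$-) differentiation drops a factor proportional to $\xi/[t(1+\tfrac{1}{12}\nu^{-2}t^2)]$, and the extra $|\xi|$ in the Gaussian moment upgrades the one-dimensional integral by $[t(1+\tfrac{1}{12}\nu^{-2}t^2)]^{1/2}$. The net gain relative to Lemma \ref{lem3.1} is therefore precisely $t^{-1/2}(1+\tfrac{1}{12}\nu^{-2}t^2)^{-1/2}$, which is exactly the difference between \eqref{eq:11} and the bound in Lemma \ref{lem3.1}. The only mildly delicate point is keeping track of the powers so that the enhanced-dissipation factor $(1+\tfrac{1}{12}\nu^{-2}t^2)^{-1/2}$ is correctly attributed to the derivative rather than lost in the exponents; this is handled cleanly by performing the substitution $\xi=x-x'-\tfrac{t}{2\nu}(y+y')$ before integrating.
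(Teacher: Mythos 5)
Your proposal is correct and follows essentially the same route as the paper: differentiate the explicit formula \eqref{eq:9} to pick up the factor $\bigl(x-x'-\tfrac{t}{2\nu}(y+y')\bigr)/\bigl[2t(1+\tfrac{1}{12}\nu^{-2}t^2)\bigr]$, split off $\bigl[t(1+\tfrac{1}{12}\nu^{-2}t^2)\bigr]^{-1/2}$ so the remaining polynomial weight is a normalized Gaussian moment, and then integrate the two one-dimensional Gaussians; the resulting exponents match \eqref{eq:11} exactly. Your explicit substitution $\xi=x-x'-\tfrac{t}{2\nu}(y+y')$ and the observation $\partial_{x'}\mathbb{G}=-\partial_x\mathbb{G}$ are just cleaner renderings of the same bookkeeping the paper performs.
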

\begin {proof}
The expression of the Green's function in \eqref{eq:9} indicates that
$$
\begin{aligned}
	&	\|\partial_{x}\mathbb{G}(x-\cdot,y,t;\cdot)\|_{L^p}^p\\
	&=\left( \frac{1}{4 \pi t}\right) ^p\left(1+\frac{1}{12} \nu^{-2} t^2\right)^{-p / 2} \cdot\\
	& \int_{\mathbb{R}^2}\exp \left(\frac{-p\left(x-x'-\frac{ t}{2\nu}\left(y+y'\right)\right)^2}{4 t\left(1+\frac{1}{12}  \nu^{-2} t^2\right)}\right) \exp \left( \dfrac{-p(y-y')^2}{4t}\right) \left( \frac{x-x'-\frac{ t}{2\nu}\left(y+y'\right)}{2 t\left(1+\frac{1}{12}  \nu^{-2} t^2\right)}\right)^p dx'dy'\\
	&=\left( \frac{1}{4 \pi t}\right) ^{p+p/2}\left(1+\frac{1}{12} \nu^{-2} t^2\right)^{-p }\cdot \\
	& \int_{\mathbb{R}} \exp \left( \dfrac{-p(y-y')^2}{4t}\right)dy'\int_{\mathbb{R}}\exp \left(\frac{-p\left(x-x'-\frac{ t}{2\nu}\left(y+y'\right)\right)^2}{4 t\left(1+\frac{1}{12}  \nu^{-2} t^2\right)}\right)\left(  \frac{x-x'-\frac{ t}{2\nu}\left(y+y'\right)}{\sqrt{2 t\left(1+\frac{1}{12}  \nu^{-2} t^2\right)}}\right)^pdx'\\
	&\leq C \left( \frac{1}{4 \pi t}\right) ^{p+p/2-1}\left(1+\frac{1}{12} \nu^{-2} t^2\right)^{-p  +1/2},
\end{aligned}
$$
so, we obtain 
	$$\|\partial_{x}\mathbb{G}(x-\cdot,y,t;\cdot)\|_{L^p}\leq Ct^{-(1-\frac{1}{p})-\frac{1}{2}}\left(1+\frac{1}{12} \nu^{-2} t^2\right)^{-\frac{1}{2}(1-\frac{1}{p})-\frac{1}{2}}.$$
Similarly, we obtain 
	$$\|\partial_{x'}\mathbb{G}(x-\cdot,y,t;\cdot)\|_{L^p}\leq Ct^{-(1-\frac{1}{p})-\frac{1}{2}}\left(1+\frac{1}{12} \nu^{-2} t^2\right)^{-\frac{1}{2}(1-\frac{1}{p})-\frac{1}{2}}.$$
	This completes the proof of the Lemma \ref{lem3.2}.
	\end{proof}
		Next, the objective is to estimate the derivative for Green's function with respect to the $(y, y').$
\begin{lemma}\label{lem3.3}
		The following derivative estimate for the Green's function is at our disposal for $p\geq 1$
		\begin{equation}\label{eq:12}
			\begin{aligned}
				&	\|\partial_{y,y'}\mathbb{G}(x-\cdot,y,t;\cdot)\|_{L^p}\leq Ct^{-(1-\frac{1}{p})-\frac{1}{2}}\left(1+\frac{1}{12} \nu^{-2} t^2\right)^{-\frac{1}{2}(1-\frac{1}{p})},\\
				&	\|\partial_{y,y'}\mathbb{G}(\cdot-x',\cdot,t;y')\|_{L^p}\leq Ct^{-(1-\frac{1}{p})-\frac{1}{2}}\left(1+\frac{1}{12} \nu^{-2} t^2\right)^{-\frac{1}{2}(1-\frac{1}{p})}.
			\end{aligned}
		\end{equation}
		\end{lemma}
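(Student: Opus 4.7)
The plan is to differentiate the explicit formula \eqref{eq:9} with respect to $y$ and $y'$ and exploit a key algebraic cancellation rather than estimate the resulting Gaussian by brute force. Writing $E$ for the exponent in \eqref{eq:9}, a direct computation gives
\[
\partial_y E = \frac{x-\frac{t}{2\nu}(y+y')}{4\nu\bigl(1+\tfrac{1}{12}\nu^{-2}t^2\bigr)} - \frac{y-y'}{2t}, \qquad \partial_x E = \frac{-\bigl(x-\frac{t}{2\nu}(y+y')\bigr)}{2t\bigl(1+\tfrac{1}{12}\nu^{-2}t^2\bigr)}.
\]
Comparing coefficients shows that the first term of $\partial_y E$ equals $-\tfrac{t}{2\nu}\partial_x E$, yielding the crucial decomposition
\[
\partial_y\mathbb{G} \;=\; -\frac{t}{2\nu}\,\partial_x\mathbb{G} \;-\; \frac{y-y'}{2t}\,\mathbb{G},
\]
together with the analogous identity for $\partial_{y'}\mathbb{G}$ (same first term, opposite sign on the second).

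For the first summand I would invoke Lemma \ref{lem3.2} directly, and then absorb the prefactor $t/\nu$ using the elementary bound
\[
\frac{t}{\nu} \;\lesssim\; \Bigl(1+\tfrac{1}{12}\nu^{-2}t^2\Bigr)^{1/2},
\]
which follows because $1+\tfrac{1}{12}\nu^{-2}t^2 \geq \tfrac{1}{12}\nu^{-2}t^2$. The $(1+\tfrac{1}{12}\nu^{-2}t^2)^{1/2}$ thus produced is exactly what converts the weight $(1+\tfrac{1}{12}\nu^{-2}t^2)^{-\frac{1}{2}(1-1/p)-\frac{1}{2}}$ appearing in Lemma \ref{lem3.2} into the weaker weight $(1+\tfrac{1}{12}\nu^{-2}t^2)^{-\frac{1}{2}(1-1/p)}$ stated in \eqref{eq:12}.

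For the second summand $\tfrac{y-y'}{2t}\mathbb{G}$, I would estimate the $L^p$ norm by direct Gaussian integration in the style of Lemma \ref{lem3.1}. The $x'$-integral produces, as before, a factor $\sqrt{4\pi t(1+\tfrac{1}{12}\nu^{-2}t^2)/p}$, while the one-dimensional moment integral $\int_{\mathbb{R}} |y-y'|^p\exp\bigl(-\tfrac{p(y-y')^2}{4t}\bigr)\,dy' = C_p\, t^{(p+1)/2}$ combines with the $(2t)^{-p}$ prefactor to yield precisely $Ct^{-(1-1/p)-1/2}\bigl(1+\tfrac{1}{12}\nu^{-2}t^2\bigr)^{-\frac{1}{2}(1-1/p)}$. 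Adding the two bounds gives \eqref{eq:12} for $\partial_y\mathbb{G}$, and the same computation with the sign flip handles $\partial_{y'}\mathbb{G}$; the bound on $\|\partial_{y,y'}\mathbb{G}(\cdot-x',\cdot,t;y')\|_{L^p}$ follows by relabeling variables in the same argument.

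The only real content beyond routine computation is the decomposition identity and the observation that the $t/\nu$ produced by the chain rule (through the shear argument $x-\tfrac{t}{2\nu}(y+y')$) is exactly consumed by one factor of $(1+\tfrac{1}{12}\nu^{-2}t^2)^{1/2}$. This is the reason the $y$-derivative estimate \eqref{eq:12} loses one power of the enhanced-dissipation weight compared with the $x$-derivative estimate \eqref{eq:11}, and it is the single place where one must be careful; the remainder is a standard Gaussian-integral exercise.
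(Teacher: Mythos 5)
Your proof is correct and takes essentially the same approach as the paper: the paper likewise splits $\partial_y\mathbb{G}$ into the shear-argument contribution $M_1$ (which is exactly your $-\frac{t}{2\nu}\partial_x\mathbb{G}$) and the plain Gaussian contribution $M_2$ (your $-\frac{y-y'}{2t}\mathbb{G}$), bounds each by the same Gaussian moment integrals, and absorbs the factor $t/\nu$ into one power of $\left(1+\frac{1}{12}\nu^{-2}t^2\right)^{1/2}$, which is precisely why one power of the enhanced-dissipation weight is lost relative to Lemma \ref{lem3.2}. The only cosmetic difference is that you recycle Lemma \ref{lem3.2} for the first piece while the paper recomputes that Gaussian integral directly.
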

\begin{proof}
The $\partial_{y}\mathbb{G}(x-x',y,t;y')$ can be derived through direct computation,  
\[
\begin{aligned}
&\partial_{y}\mathbb{G}(x-x',y,t;y')=\\
&\frac{1}{4 \pi t}\left(1+\frac{1}{12} \nu^{-2} t^2\right)^{-1 / 2}\cdot\\
&\exp \left(\frac{-\left(x-x'-\frac{ t}{2\nu}\left(y+y'\right)\right)^2}{4 t\left(1+\frac{1}{12}  \nu^{-2} t^2\right)}-\dfrac{(y-y')^2}{4t}\right) \left(\frac{t}{\nu} \frac{\left( x-x'-\frac{ t}{2\nu}\left(y+y'\right)\right) }{4t\left(1+\frac{1}{12}  \nu^{-2} t^2\right)}\right)\\
&+\frac{1}{4 \pi t}\left(1+\frac{1}{12} \nu^{-2} t^2\right)^{-1 / 2}\cdot\\
&\exp \left(\frac{-\left(x-x'-\frac{ t}{2\nu}\left(y+y'\right)\right)^2}{4 t\left(1+\frac{1}{12}  \nu^{-2} t^2\right)}\right) \exp \left( \dfrac{-(y-y')^2}{4t}\right)\left( \frac{-2\left( y-y'\right) }{4t}\right) \\
&=M_1+M_2,
\end{aligned}
\]
where we divide it into two parts.
The first step involves estimating  $M_1$:
\begin{equation}\label{eq:13}
\begin{aligned}
	&	\|M_1(x-\cdot,y,t;\cdot)\|_{L^p}^p\\
	&\leq C\left( \frac{1}{4 \pi t}\right) ^{p+\frac{p}{2}}\left(1+\frac{1}{12} \nu^{-2} t^2\right)^{-p / 2} \cdot\\
	& \int_{\mathbb{R}^2}\exp \left(\frac{-p\left(x-x'-\frac{ t}{2\nu}\left(y+y'\right)\right)^2}{4 t\left(1+\frac{1}{12}  \nu^{-2} t^2\right)}\right) \exp \left( \dfrac{-p(y-y')^2}{4t}\right) \left(  \frac{x-x'-\frac{ t}{2\nu}\left(y+y'\right)}{\sqrt{2 t\left(1+\frac{1}{12}  \nu^{-2} t^2\right)}}\right)^pdx'dy'\\
	&\leq C \left( \frac{1}{4 \pi t}\right) ^{p+p/2-1}\left(1+\frac{1}{12} \nu^{-2} t^2\right)^{-p/2 +1/2}.
\end{aligned}
\end{equation}
We proceed to estimate the second dynamic form factor $M_2$:
\begin{equation}\label{eq:14}
\begin{aligned}
	&	\|M_2(x-\cdot,y,t;\cdot)\|_{L^p}^p\\
	&=\left( \frac{1}{4 \pi t}\right) ^p\left(1+\frac{1}{12} \nu^{-2} t^2\right)^{-p / 2} \cdot\\
	& \int_{\mathbb{R}^2}\exp \left(\frac{-p\left(x-x'-\frac{ t}{2\nu}\left(y+y'\right)\right)^2}{4 t\left(1+\frac{1}{12}  \nu^{-2} t^2\right)}\right) \exp \left( \dfrac{-p(y-y')^2}{4t}\right) \left( \frac{\left(y-y'\right)}{ \sqrt{t}}\right) ^p\left( \frac{1}{2\sqrt{t}}\right) ^pdx'dy'\\
	&\leq C \left( \frac{1}{4 \pi t}\right) ^{p+p/2-1}\left(1+\frac{1}{12} \nu^{-2} t^2\right)^{-p/2 +1/2}.
	\end{aligned}
\end{equation}
By combining equations \eqref{eq:13} and \eqref{eq:14}, we can derive the desired outcome.

Similarly, we obtain
$$\|\partial_{y'}\mathbb{G}(x-\cdot,y,t;\cdot)\|_{L^p}\leq Ct^{-(1-\frac{1}{p})-\frac{1}{2}}\left(1+\frac{1}{12} \nu^{-2} t^2\right)^{-\frac{1}{2}(1-\frac{1}{p})}.$$
This completes the proof of the Lemma \ref{lem3.3}.
\end{proof}
\section{Nonlinear stability}\label{sec4}
In this section, we establish the proof of  Theorem \ref{thm1}. The global existence of smooth solution is well-known  for the data $$\omega_0\in H^1(\mathbb{R}^2)\cap L^{1}(\mathbb{R}^2).$$  The primary focus of Theorem \ref{thm1} lies in the estimation of stability
\begin{equation}\label{eq:15}
	||\omega(t)||_{L^2}\leq Cc_0(1+t)^{-1}\nu^{\frac{3}{4}}.
\end{equation}
 We aim to  use the bootstrap argument to establish the estimates \eqref{eq:15} if the initial data satisfies 
 \begin{equation}\label{eq:16}
  \|\omega_{0}\|_{H^{1}\cap L^1}=\epsilon\leq c_0\nu^{\frac{3}{4}},
\end{equation}
with sufficiently small constant $c_0.$

The bootstrap argument is applied by considering a fixed number, denoted as $\delta>0$. For any $t \in[0, T]$, we begin with the hypothesis that 
\begin{equation}\label{eq:17}
\mathbf{H}(t):	||\omega(t)||_{L^2}\leq \delta(1+t)^{-1}\epsilon,
\end{equation}
and the conclusion, denoted as
\begin{equation}\label{eq:18}
\mathbf{C}(t):	||\omega(t)||_{L^2}\leq\frac{\delta}{2} (1+t)^{-1}\epsilon.
\end{equation}
The conditions $b-d$ stated in Lemma \ref{lem2.1} are satisfied, and we only need to verify condition $a$ under the assumption of \eqref{eq:16}.
\subsection{Stability estimate in short time $t\leq2 \nu^{\frac{1}{2}}$ }\
\begin{proposition}\label{pro4.1}
	Assuming that $t\leq2 \nu^{\frac{1}{2}}$ and the existence of a sufficiently small constant $c_0$, which is independent of $\nu$, such that $$\|\omega_{0}\|_{H^{1}\cap L^1}=\epsilon\leq c_0\nu^{3/4},$$then based on the bootstrap hypothesis \eqref{eq:17}, we can infer$$\|\omega(t)\|_{L^2}\leq \frac{\delta}{2}(1+t)^{-1}\epsilon.$$
\end{proposition}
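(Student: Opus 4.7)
My strategy is to apply Duhamel's formula built from the Green's function $\mathbb{G}$ of Section \ref{sec3} and split the solution into a linear piece driven by the initial data and a nonlinear piece coming from the interaction term $-\nabla\cdot(u\omega)$. For the linear piece, Young's inequality (Lemma \ref{lem2.2}) combined with Lemma \ref{lem3.1} at $p=1$ gives the uniform kernel bound that yields $\|\mathbb{G}(t)\star\omega_0\|_{L^2} \leq C\|\omega_0\|_{L^2} \leq C\epsilon$, essentially the $L^2$-contractivity of the linearized evolution; this is the baseline that the nonlinear correction must not exceed.

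For the nonlinear piece, I integrate by parts in $(x',y')$ to transfer the divergence onto $\mathbb{G}$, producing $\partial_{x'}\mathbb{G}$ and $\partial_{y'}\mathbb{G}$ factors controlled by Lemmas \ref{lem3.2} and \ref{lem3.3}. I then apply Young's inequality with $u\omega$ placed in $L^{4/3}$, so that $\nabla\mathbb{G}$ sits in $L^{4/3}$ for an $L^2$ output, and I estimate $\|u\omega\|_{L^{4/3}}\leq\|u\|_{L^4}\|\omega\|_{L^2}$ by H\"older. The 2-D Biot--Savart law combined with the Hardy--Littlewood--Sobolev inequality gives $\|u\|_{L^4}\leq C\|\omega\|_{L^{4/3}}$, and interpolation gives $\|\omega\|_{L^{4/3}}\leq\|\omega\|_{L^1}^{1/2}\|\omega\|_{L^2}^{1/2}$. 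A standard $\mathrm{sgn}\,\omega$ argument, exploiting $\nabla\cdot u=0$ together with the convexity associated with $-\Delta$, shows $\|\omega(s)\|_{L^1}\leq\|\omega_0\|_{L^1}\leq\epsilon$; combining with the bootstrap hypothesis \eqref{eq:17} yields $\|u\omega(s)\|_{L^{4/3}}\lesssim\delta^{3/2}\epsilon^2(1+s)^{-3/2}$.

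Inserting these ingredients into Duhamel produces a short-time integral of the form $\int_0^t(t-s)^{-3/4}(1+s)^{-3/2}\,ds \lesssim t^{1/4}$, which on $[0, 2\nu^{1/2}]$ is bounded by $C\nu^{1/8}$. This positive power of $\nu$ combines with the $\nu^{-3/4}$ produced by differentiating $\mathbb{G}$ at $p=4/3$ and with the smallness assumption $\epsilon\leq c_0\nu^{3/4}$ to yield an overall nonlinear bound of the form $C\delta^{3/2}c_0\epsilon\nu^{1/8}$. Since $(1+t)^{-1}\geq\tfrac12$ on $[0, 2\nu^{1/2}]$ once $\nu$ is small, assembling the linear and nonlinear bounds and choosing $\delta$ large relative to the universal constants (together with small $c_0$, or equivalently small $\nu$) closes the desired estimate $\|\omega(t)\|_{L^2}\leq\tfrac{\delta}{2}(1+t)^{-1}\epsilon$. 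The principal obstacle is precisely this delicate compensation: the enhanced-dissipation factor $(1+\nu^{-2}t^2)^{-1/2}$ has not yet become sharply active in the short-time window, so the positive power of $\nu$ needed to beat the $\nu^{-3/4}$ coming from $\nabla\mathbb{G}$ must be harvested from the shortness of the interval itself via $t^{1/4}\leq C\nu^{1/8}$, with the assumption $\epsilon\leq c_0\nu^{3/4}$ providing the remaining slack.
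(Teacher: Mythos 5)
Your overall architecture (Duhamel formula, integration by parts onto $\nabla_{x',y'}\mathbb{G}$, Young's inequality via Lemmas \ref{lem2.2}--\ref{lem3.3}, Biot--Savart plus interpolation, closing the bootstrap by choosing $\delta$ and $c_0$) is the same as the paper's, and your linear estimate and your bound $\|u\omega(s)\|_{L^{4/3}}\lesssim \delta^{3/2}\epsilon^{2}(1+s)^{-3/2}$ are sound. The gap is in the power counting for the nonlinear term: you claim simultaneously a prefactor $\nu^{-3/4}$ ``produced by differentiating $\mathbb{G}$ at $p=4/3$'' and a gain $t^{1/4}\lesssim \nu^{1/8}$ from the time integral, but these two gains are mutually exclusive. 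The Duhamel prefactor coming from the rescaled system \eqref{eq:7} is $\nu^{-1}$, and Lemmas \ref{lem3.2}--\ref{lem3.3} at $p=4/3$ give at best $\|\nabla_{x',y'}\mathbb{G}(\tau)\|_{L^{4/3}}\lesssim \tau^{-3/4}\bigl(1+\tfrac{1}{12}\nu^{-2}\tau^{2}\bigr)^{-1/8}$ (the $y$-derivative carries only the exponent $-\tfrac18$). To convert $\nu^{-1}$ into $\nu^{-3/4}$ you must spend the dissipation factor as $\bigl(1+\tfrac{1}{12}\nu^{-2}\tau^{2}\bigr)^{-1/8}\lesssim \nu^{1/4}\tau^{-1/4}$, but then the kernel becomes $(t-s)^{-1}$, whose integral over $[0,t]$ diverges, so there is no $t^{1/4}$ left to harvest. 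If instead you keep the kernel at $(t-s)^{-3/4}$, the prefactor stays at $\nu^{-1}$ and the final bound is $\nu^{-1}\cdot\nu^{1/8}\cdot\epsilon^{2}\lesssim c_0\nu^{-1/8}\epsilon$, which does not close. Even the best compromise at fixed exponent $4/3$ (splitting the time integral near and far from $s=t$) yields only $\nu^{-3/4}\log(1/\nu)$, because $p=4/3$ is exactly the critical value at which $2(1-\tfrac1p)+\tfrac12=1$.

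The paper circumvents this by using two different Lebesgue exponents. On $[t/2,t]$ it places $\nabla_{x',y'}\mathbb{G}$ in $L^{10/9}$ and $u\omega$ in $L^{5/3}$ (with $u\in L^{10}$), so that after spending the dissipation factor the kernel is $(t-s)^{-7/10}$, which is integrable; the near-region contribution is then $\nu^{-9/10}\cdot\nu^{3/20}=\nu^{-3/4}$. On $[0,t/2]$ it uses $p=4/3$ as you do, but there $(t-s)^{-1}\le Ct^{-1}$ is harmless because $s$ stays away from $t$. Any fixed $p<4/3$ in the near region works and always yields the net factor $\nu^{-1}\cdot\nu^{1-1/p}\cdot\nu^{\frac12\left(\frac12-2(1-1/p)\right)}=\nu^{-3/4}$; your choice $p=4/3$ sits exactly on the borderline. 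To repair your argument, lower the exponent on $\nabla_{x',y'}\mathbb{G}$ for $s$ near $t$ (adjusting the H\"older pairing on $u\omega$ accordingly) and treat $[0,t/2]$ separately. Your use of the $L^1$ maximum principle for the vorticity in place of the paper's Gagliardo--Nirenberg step is a legitimate variant and not the source of the problem.
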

\begin{proof}
By Duhamel principle and  intergration by parts, the solution of \eqref{eq:3} can be expressed as
\begin{equation}\label{eq:19}
	\begin{aligned}
		&\omega(x,y,t)=\int_{\mathbb{R}^2}\mathbb{G}(x-x',y,t;y')\omega_{0}(x',y')dx'dy'\\
		&+\nu^{-1}\int_{0}^{t}\int_{\mathbb{R}^2}-\mathbb{G}(x-x',y,t-s;y')\nabla\cdot\left( u\omega\right) (x',y',s)dx'dy'ds\\
		&=\int_{\mathbb{R}^2}\mathbb{G}(x-x',y,t;y')\omega_{0}(x',y')dx'dy'\\
		&+\nu^{-1}\int_{0}^{t}\int_{\mathbb{R}^2}\nabla_{x',y'}\mathbb{G}(x-x',y,t-s;y')\left( u\omega\right) (x',y',s)dx'dy'ds\\
		&=J_1+J_2.\\
	\end{aligned}
\end{equation}
The term $J_1$ is initially estimated using Lemma \ref{lem2.2} and Lemma \ref{lem3.1}, thereby establishing the following result
\begin{equation}\label{eq:20}
	\begin{aligned}
		\| J_1\|_{L^2}&=\left\|\int_{\mathbb{R}^2}\mathbb{G}(x-x',y,t;y')\omega_{0}(x',y')dx'dy'\right\|_{L^2}\leq ||\mathbb{G}||_{L^2}\|\omega_{0}\|_{L^1}\\
		& \leq C_1\nu^{\frac{1}{2}} t^{-1}\|\omega_{0}\|_{L^1},
	\end{aligned}
\end{equation}
or 
\begin{equation}\label{eq:21}
	\begin{aligned}
		\| J_1\|_{L^2}&=\left\|\int_{\mathbb{R}^2}\mathbb{G}(x-x',y,t;y')\omega_{0}(x',y')dx'dy'\right\|_{L^2}\leq \|\mathbb{G}\|_{L^1}\|\omega_{0}\|_{L^2}\\
		& \leq C_2\|\omega_{0}\|_{L^2},
	\end{aligned}
\end{equation}
and the combination of equations \eqref{eq:20} and \eqref{eq:21} yields the following result
\begin{equation}\label{eq:22}
	\| J_1\|_{L^2}\leq C_3(1+t)^{-1}\epsilon.
\end{equation}
The next step is to divide $J_2 $ into two separate components
\begin{equation}\label{eq:23}
	\begin{aligned}
		J_2&=\nu^{-1}\int_{0}^{t}\int_{\mathbb{R}^2}\nabla_{x',y'}\mathbb{G}(x-x',y,t-s;y')\left( u\omega\right) (x',y',s)dx'dy'ds\\
		&=\nu^{-1}\int_{t/2}^{t}\int_{\mathbb{R}^2}\nabla_{x',y'}\mathbb{G}(x-x',y,t-s;y')\left( u\omega\right) (x',y',s)dx'dy'ds\\
		&+\nu^{-1}\int_{0}^{t/2}\int_{\mathbb{R}^2}\nabla_{x',y'}\mathbb{G}(x-x',y,t-s;y')\left( u\omega\right) (x',y',s)dx'dy'ds\\
		&=J_{21}+J_{22}.
	\end{aligned}
\end{equation}
The term $J_{21}$ is supported by Lemma \ref{lem2.2}, Lemma \ref{lem3.2},  Lemma \ref{lem3.3} and the bootstrap hypothesis \eqref{eq:17}, then we have
\begin{equation}\label{eq:24}
	\begin{aligned}
		\|J_{21}\|_{L^2}&=\nu^{-1}\left\|\int_{t/2}^{t}\int_{\mathbb{R}^2}\nabla_{x',y'}\mathbb{G}(x-x',y,t-s;y')\left( u\omega\right) (x',y',s)dx'dy'ds\right\|_{L^2}\\
	&	\leq \nu^{-1}\int_{t/2}^{t}\left\|\int_{\mathbb{R}^2}\nabla_{x',y'}\mathbb{G}(x-x',y,t-s;y')\left( u\omega\right) (x',y',s)dx'dy'\right\|_{L^2}ds\\
		&	\leq \nu^{-1}\int_{t/2}^{t}\|\nabla_{x',y'}\mathbb{G}(x-\cdot,y,t-s;\cdot)\|_{L^{10/9}}\|\left( u\omega\right) (s)\|_{L^{\frac{5}{3}}}ds\\
		&	\leq C\nu^{-9/10}\int_{t/2}^{t}(t-s)^{-\frac{5}{2}+\frac{9}{5}}\| \omega(s)\|_{L^2}\|u(s)\|_{L^{10}}ds.\\
		&	\leq C\delta(1+t)^{-1}\epsilon^2\nu^{-3/4},
	\end{aligned}
\end{equation}
where the condition $t\leq2 \nu^{\frac{1}{2}}$ was employed in this case. 

In relation to term $J_{22},$ we consulted Lemma \ref{lem2.2}, Lemma \ref{lem3.2} and Lemma \ref{lem3.3} for guidance, while incorporating the bootstrap hypothesis \eqref{eq:17} through application of the Gagliardo-Nirenberg inequality, resulting in
\begin{equation}\label{eq:25}
	\begin{aligned}
		\|J_{22}\|_{L^2}
		&	\leq \nu^{-1}\int_{0}^{t/2}\left\|\int_{\mathbb{R}^2}\nabla_{x',y'}\mathbb{G}(x-x',y,t-s;y')\left( u\omega\right) (x',y',s)dx'dy'\right\|_{L^2}ds\\
		&	\leq C\nu^{-1}\int_{0}^{t/2}(t-s)^{-\frac{2}{5}-\frac{1}{2}}\left(1+\frac{1}{12} \nu^{-2} (t-s)^2\right)^{-\frac{1}{5}}\|\left( u\omega\right) (s)\|_{L^{\frac{10}{9}}}ds\\
		&	\leq C\nu^{-3/5}\int_{0}^{t/2}(t-s)^{-\frac{4}{5}-\frac{1}{2}}\| \omega(s)\|_{L^2}\|u(s)\|_{L^{\frac{5}{2}}}ds\\
		&	\leq C\nu^{-3/5}t^{-\frac{13}{10}}\int_{0}^{t/2}\| \omega(s)\|_{L^2}\|u(s)\|_{L^{2}}^{\frac{9}{10}}\|\nabla^2u(s)\|_{L^{2}}^{\frac{1}{10}}ds\\
		&	\leq C\delta\epsilon^2\nu^{-3/5}t^{-\frac{13}{10}}\int_{0}^{t/2}(1+s)^{-1}ds\\
		&\leq C\delta\epsilon^2\nu^{-1/10}t^{-\frac{13}{10}},\\
	\end{aligned}
\end{equation}
where the condition $t\leq2 \nu^{\frac{1}{2}}$ was utilized. 

The bounds of $J_{22}$ can be derived using the same methodology
 \begin{equation}\label{eq:26}
	\begin{aligned}
		\|J_{22}\|_{L^2}
		&	\leq C\nu^{-1}\int_{0}^{t/2}\|\nabla_{x',y'}\mathbb{G}(x-\cdot,y,t-s;\cdot)\|_{L^{\frac{4}{3}}}\|\left( u\omega\right) (s)\|_{L^{\frac{4}{3}}}ds\\
		&	\leq C\nu^{-3/4}t^{-1}\int_{0}^{t/2}\| \omega(s)\|_{L^2}\|u(s)\|_{L^{2}}^{\frac{3}{4}}\|\nabla^2u(s)\|_{L^{2}}^{\frac{1}{4}}ds\\
		&	\leq C\delta\epsilon^2\nu^{-3/4}t^{-1}\int_{0}^{t/2}(1+s)^{-1}ds\\
		&	\leq C\delta\epsilon^2\nu^{-3/4}.\\
	\end{aligned}
\end{equation}
The combination of \eqref{eq:25} and \eqref{eq:26}, in conjunction with, allows us to derive

\begin{equation}\label{eq:27}
	\|J_{22}\|_{L^2}\leq C\delta\epsilon^2\nu^{-3/4}(1+t)^{-1}.
\end{equation}
If $\delta$  is suitably selected and $c_0$ is sufficiently small, meaning
 $$C_3\leq\frac{\delta}{2}, \quad \epsilon\leq c_0\nu^{3/4}.$$
 By combining equations \eqref{eq:19}, \eqref{eq:22}, \eqref{eq:24} and \eqref{eq:27}, as referenced, then we obtain the conclusion that
 \begin{equation}\label{4.13}
 	\begin{aligned}
  \|\omega(t)\|_{L^2}&\leq \|J_1\|_{L^2}+\|J_2||_{L^2}\leq\|J_1\|_{L^2}+\|J_{21}\|_{L^2}+\|J_{22}\|_{L^2}\\
  &\leq C_3(1+t)^{-1}\epsilon+C\delta\epsilon^2\nu^{-3/4}(1+t)^{-1}\\
  &\leq \frac{\delta}{2}(1+t)^{-1}\epsilon.
  	\end{aligned}
 \end{equation}
This completes the proof of the Proposition \ref{pro4.1}.
 \end{proof}
\subsection{Stability estimate in  time $t\geq2 \nu^{\frac{1}{2}}$ }\
\begin{proposition}\label{pro4.2}
	Assuming that $t\geq2 \nu^{\frac{1}{2}}$ and the existence of a sufficiently small constant $c_0$, which is independent of $\nu$, such that $$\|\omega_{0}\|_{H^{1}\cap L^1}=\epsilon\leq c_0\nu^{3/4},$$then based on the bootstrap hypothesis \eqref{eq:17}, we can infer$$\|\omega(t)\|_{L^2}\leq \frac{\delta}{2}(1+t)^{-1}\epsilon.$$
\end{proposition}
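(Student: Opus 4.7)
The plan is to follow exactly the same Duhamel-based decomposition used in Proposition~\ref{pro4.1}, namely write $\omega = J_1 + J_{21} + J_{22}$ via \eqref{eq:19} and \eqref{eq:23}, and bound each piece in $L^2$ by combining Young's inequality (Lemma~\ref{lem2.2}), the Green's-function estimates of Lemmas~\ref{lem3.1}--\ref{lem3.3}, the bootstrap hypothesis \eqref{eq:17}, and a Gagliardo--Nirenberg/Hardy--Littlewood--Sobolev control of $u = \nabla^{\perp}\Delta^{-1}\omega$ in terms of $\omega$ and $\nabla\omega$. The qualitatively new ingredient in the regime $t \geq 2\nu^{1/2}$ is that the enhancement factor is genuinely active: whenever $\tau \geq \nu^{1/2}$, one has
$$\left(1+\tfrac{1}{12}\nu^{-2}\tau^2\right)^{-\alpha} \leq C\,\nu^{2\alpha}\tau^{-2\alpha},$$
so every appearance of this factor in Lemmas~\ref{lem3.1}--\ref{lem3.3} contributes extra powers of $\nu/\tau$ which compensate the $\nu^{-1}$ prefactor in front of the nonlinearity in \eqref{eq:7}.

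For $J_1$, Lemma~\ref{lem3.1} with $p = 2$ gives $\|\mathbb{G}\|_{L^2} \leq C\nu^{1/2}t^{-1}$, so Young's inequality against $\omega_0 \in L^1$ yields $\|J_1\|_{L^2} \leq C\nu^{1/2}t^{-1}\epsilon \leq C(1+t)^{-1}\epsilon$, using $\nu \leq 1$ and $t \geq 2\nu^{1/2}$. For $J_{21}$ (source times $s \in [t/2, t]$), the bootstrap hypothesis gives $\|\omega(s)\|_{L^2} \leq C\delta\epsilon(1+t)^{-1}$ throughout the interval; one pairs $\nabla_{x',y'}\mathbb{G} \in L^q$ with $u\omega \in L^p$ for some $1/p + 1/q = 3/2$ (mimicking \eqref{eq:24}), bounds $\|u\|_{L^r}$ in terms of $\omega$ by Hardy--Littlewood--Sobolev or interpolation, and uses the activated enhancement factor on the sub-interval $t-s \geq \nu^{1/2}$ to obtain $\|J_{21}\|_{L^2} \leq C\delta\epsilon^2\nu^{-3/4}(1+t)^{-1}$.

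For $J_{22}$ the enhancement factor is fully active, because $t-s \geq t/2 \geq \nu^{1/2}$ throughout $[0, t/2]$. Lemmas~\ref{lem3.2}--\ref{lem3.3} then deliver
$$\|\nabla_{x',y'}\mathbb{G}(t-s)\|_{L^q} \leq C\,\nu^{1-1/q}(t-s)^{-2(1-1/q)-1/2},$$
with an additional $\nu/(t-s)$ for the $x$-derivative. Picking $q$ close to $4/3$ as in \eqref{eq:26}, estimating $\|u(s)\|_{L^p}$ by Hardy--Littlewood--Sobolev, and integrating the factor $(1+s)^{-1}\delta\epsilon$ coming from the bootstrap again produces $\|J_{22}\|_{L^2} \leq C\delta\epsilon^2\nu^{-3/4}(1+t)^{-1}$. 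Summing all three contributions and invoking $\epsilon \leq c_0\nu^{3/4}$ gives
$$\|\omega(t)\|_{L^2} \leq C(1+t)^{-1}\epsilon + C\delta c_0(1+t)^{-1}\epsilon \leq \tfrac{\delta}{2}(1+t)^{-1}\epsilon$$
provided $\delta$ is chosen large enough (relative to the linear constant $C$) and $c_0$ small enough, which closes the bootstrap.

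The main technical obstacle is the bookkeeping of interpolation exponents: the indices $p, q$ in Young's inequality and the choice of Gagliardo--Nirenberg interpolant for $u$ must be selected so that simultaneously (i) the $(t-s)$-singularity is integrable at $s = t$ (and at $s = 0$ for $J_{22}$), (ii) the 2-D Sobolev/HLS control of $\|u\|_{L^r}$ in terms of the available norms of $\omega$ is admissible, and (iii) the extra powers of $\nu$ harvested from the enhancement factor assemble precisely to $\nu^{-3/4}$, which matches the threshold $\epsilon \leq c_0\nu^{3/4}$. In contrast with Proposition~\ref{pro4.1}, the smallness is no longer provided by the shortness of the time window but by the enhanced dissipation factor $(\nu/(t-s))^{\alpha}$, so the $s$-integrals must be made to converge at both endpoints rather than only at $s=0$.
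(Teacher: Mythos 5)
Your proposal follows essentially the same route as the paper: Duhamel's formula, the Green's function bounds of Lemmas \ref{lem3.1}--\ref{lem3.3} combined with Young's inequality (Lemma \ref{lem2.2}) and Gagliardo--Nirenberg interpolation for $u$, the bootstrap hypothesis \eqref{eq:17}, and a further splitting of the time integral over $[t/2,t]$ at $t-s=\nu^{1/2}$ --- which is exactly the paper's three-piece decomposition \eqref{eq:31} with $\theta=\tfrac12$. The far-field piece over $[0,t/2]$ and the near-singularity piece over $[t-\nu^{1/2},t]$ are handled as you describe (the paper takes $p_1=\tfrac{10}{9}<\tfrac43$ on the latter, so that $(t-s)^{-2(1-1/p_1)-1/2}$ is integrable at $s=t$), and your claimed bounds $C\delta\epsilon^2\nu^{-3/4}(1+t)^{-1}$ for each piece agree with \eqref{eq:33}, \eqref{eq:35}, \eqref{eq:38}.

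The one point where your sketch, taken literally, would go wrong is the middle interval $[t/2,\,t-\nu^{1/2}]$: you list ``integrability of the $(t-s)$-singularity at $s=t$'' as a design constraint and say the $s$-integrals ``must be made to converge at both endpoints,'' but on this interval the paper deliberately chooses $p_2=\tfrac53>\tfrac43$, for which the exponent $-2(1-1/p_2)-\tfrac12=-\tfrac{13}{10}$ makes the singularity \emph{non}-integrable at $s=t$; the integral then localizes at the endpoint $t-s=\nu^{1/2}$ and yields $\nu^{\theta(2/p_2-3/2)}=\nu^{-3/20}$, which combines with the $\nu^{-1/p_2}=\nu^{-3/5}$ coming from the $\nu^{-1}$ prefactor and the enhancement factor to give exactly $\nu^{-3/4}$ with no growth in $t$. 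Had you instead forced an integrable exponent $\beta<1$ there, the integral $\int_{t/2}^{t-\nu^{1/2}}(t-s)^{-\beta}\,ds$ would be of order $t^{1-\beta}$ and the $(1+t)^{-1}$ decay would be lost. This is precisely the exponent bookkeeping you deferred, so the proposal as written leaves open whether admissible indices exist; the paper resolves it with the concrete choices $p_1=\tfrac{10}{9}$, $p_2=\tfrac53$, $\theta=\tfrac12$.
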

\begin{proof}
The solution of equation \eqref{eq:3} can be expressed using the Duhamel principle and integration by parts
\begin{equation}\label{eq:29}
	\begin{aligned}
		&\omega(x,y,t)=\int_{\mathbb{R}^2}\mathbb{G}(x-x',y,t;y')\omega_{0}(x',y')dx'dy'\\
		&+\nu^{-1}\int_{0}^{t}\int_{\mathbb{R}^2}-\mathbb{G}(x-x',y,t-s;y')\nabla\cdot\left( u\omega\right) (x',y',s)dx'dy'ds\\
		&=\int_{\mathbb{R}^2}\mathbb{G}(x-x',y,t;y')\omega_{0}(x',y')dx'dy'\\
		&+\nu^{-1}\int_{0}^{t}\int_{\mathbb{R}^2}\nabla_{x',y'}\mathbb{G}(x-x',y,t-s;y')\left( u\omega\right) (x',y',s)dx'dy'ds\\
		&=I_1+I_2.\\
	\end{aligned}
\end{equation}
The same procedure as $ J_{1}$ can be applied to the term $I_1$, then we obtain
 \begin{equation}\label{eq:30}
 \| I_1\|_{L^2}\leq C_3(1+t)^{-1}\epsilon.
\end{equation}
The term $ I_2$, which is distinct from $J_2$, can be divided into three segments by a constant $\nu^{\theta}$ that is determined by the subsequent proof
 \begin{equation}\label{eq:31}
 		\begin{aligned}
 	I_2&=\nu^{-1}\int_{0}^{t}\int_{\mathbb{R}^2}\nabla_{x',y'}\mathbb{G}(x-x',y,t-s;y')\left( u\omega\right) (x',y',s)dx'dy'ds\\
 	&=\nu^{-1}\int_{t-\nu^\theta}^{t}\int_{\mathbb{R}^2}\nabla_{x',y'}\mathbb{G}(x-x',y,t-s;y')\left( u\omega\right) (x',y',s)dx'dy'ds\\
 	&+\nu^{-1}\int_{t/2}^{t-\nu^{\theta}}\int_{\mathbb{R}^2}\nabla_{x',y'}\mathbb{G}(x-x',y,t-s;y')\left( u\omega\right) (x',y',s)dx'dy'ds\\
 	&+\nu^{-1}\int_{0}^{\frac{t}{2}}\int_{\mathbb{R}^2}\nabla_{x',y'}\mathbb{G}(x-x',y,t-s;y')\left( u\omega\right) (x',y',s)dx'dy'ds\\
 	&=I_{21}+I_{22}+I_{23}.\\
 		\end{aligned}
\end{equation}
The term $ I_{21}$ can be deduced from Lemma \ref{lem2.2}, Lemma \ref{lem3.2} and Lemma \ref{lem3.3} that
 \begin{equation}\label{eq:32}
 		\begin{aligned}
 		 \|I_{21}\|_{L^2}&=\nu^{-1}\left\|\int_{t-\nu^\theta}^{t}\int_{\mathbb{R}^2}\nabla_{x',y'}\mathbb{G}(x-x',y,t-s;y')\left( u\omega\right) (x',y',s)dx'dy'ds\right\|_{L^2}\\
 		  &	\leq \nu^{-1}\int_{t-\nu^\theta}^{t}\left\|\int_{\mathbb{R}^2}\nabla_{x',y'}\mathbb{G}(x-x',y,t-s;y')\left( u\omega\right) (x',y',s)dx'dy'\right\|_{L^2}ds\\
 		    &	\leq \nu^{-1}\int_{t-\nu^\theta}^{t}\|\nabla_{x',y'}\mathbb{G}(x-\cdot,y,t-s;\cdot)\|_{L^{p_1}}\|\left( u\omega\right) (s)\|_{L^{\frac{2p_1}{3p_1-2}}}ds\\
 		    &	\leq C\nu^{-1}\int_{t-\nu^\theta}^{t}(t-s)^{-(1-\frac{1}{p_1})-\frac{1}{2}}\left(1+\frac{1}{12} \nu^{-2} (t-s)^2\right)^{-\frac{1}{2}(1-\frac{1}{p_1})}\|\left( u\omega\right) (s)\|_{L^{\frac{2p_1}{3p_1-2}}}ds\\
 		      &	\leq C\nu^{-1}\int_{t-\nu^\theta}^{t}(t-s)^{-(1-\frac{1}{p_1})-\frac{1}{2}}\left(1+\frac{1}{12} \nu^{-2} (t-s)^2\right)^{-\frac{1}{2}(1-\frac{1}{p_1})}\|\left( u\omega\right) (s)\|_{L^{\frac{2p_1}{3p_1-2}}}ds\\
 		      &	\leq C\nu^{-1/p_1}\int_{t-\nu^\theta}^{t}(t-s)^{-2(1-\frac{1}{p_1})-\frac{1}{2}}\|\left( u\omega\right) (s)\|_{L^{\frac{2p_1}{3p_1-2}}}ds\\
 		       &	\leq C\nu^{-1/p_1}\int_{t-\nu^\theta}^{t}(t-s)^{-2(1-\frac{1}{p_1})-\frac{1}{2}}\| \omega(s)\|_{L^2}\|u(s)\|_{L^{\frac{p_1}{p_1-1}}}ds.
 		 	\end{aligned}
 		\end{equation}
 		Based on the bootstrap hypothesis \eqref{eq:17}, by ensuring the integrability of time at $s=t,$  and considering the requirement $1\leq p_1<4/3$, we can deduce that
 \begin{equation}\label{eq:33}
	\begin{aligned}
		\|I_{21}\|_{L^2}
 &	\leq C\delta(1+t)^{-1}\epsilon\nu^{-1/p_1}\int_{t-\nu^\theta}^{t}(t-s)^{-2(1-\frac{1}{p_1})-\frac{1}{2}}\|u(s)\|_{L^{\frac{p_1}{p_1-1}}}ds\\
  &	\leq C\delta(1+t)^{-1}\epsilon\nu^{-1/p_1}\int_{t-\nu^\theta}^{t}(t-s)^{-2(1-\frac{1}{p_1})-\frac{1}{2}}\|u(s)\|_{L^2}^{\frac{3}{2}-\frac{1}{p_1}}\|\nabla^2u(s)\|_{L^2}^{\frac{1}{p_1}-\frac{1}{2}}ds\\
  &	\leq C\delta(1+t)^{-1}\epsilon^2\nu^{-1/p_1+2\theta/p_1-3\theta/2}.
\end{aligned}
\end{equation}
The next step involves estimating the term $I_{22}$ as follows
\begin{equation}\label{eq:34}
	\begin{aligned}
		\|I_{22}\|_{L^2}&=\nu^{-1}\left\|\int_{t/2}^{t-\nu^\theta}\int_{\mathbb{R}^2}\nabla_{x',y'}\mathbb{G}(x-x',y,t-s;y')\left( u\omega\right) (x',y',s)dx'dy'ds\right\|_{L^2}\\
		&	\leq \nu^{-1}\int_{t/2}^{t-\nu^\theta}\left\|\int_{\mathbb{R}^2}\nabla_{x',y'}\mathbb{G}(x-x',y,t-s;y')\left( u\omega\right) (x',y',s)dx'dy'\right\|_{L^2}ds\\
		&	\leq \nu^{-1}\int_{t/2}^{t-\nu^\theta}\|\nabla_{x',y'}\mathbb{G}(x-\cdot,y,t-s;\cdot)\|_{L^{p_2}}\|\left( u\omega\right) (s)\|_{L^{\frac{2p_2}{3p_2-2}}}ds\\
		&	\leq C\nu^{-1/p_2}\int_{t/2}^{t-\nu^\theta}(t-s)^{-2(1-\frac{1}{p_2})-\frac{1}{2}}\|\left( u\omega\right) (s)\|_{L^{\frac{2p_2}{3p_2-2}}}ds\\
		&	\leq C\nu^{-1/p_2}\int_{t/2}^{t-\nu^\theta}(t-s)^{-2(1-\frac{1}{p_2})-\frac{1}{2}}\| \omega(s)\|_{L^2}\|u(s)\|_{L^{\frac{p_2}{p_2-1}}}ds.\\
	\end{aligned}
\end{equation}
Based on the bootstrap hypothesis \eqref{eq:17} and the requirement that $4/3< p_2\leq2$, it can be inferred that 
\begin{equation}\label{eq:35}
	\begin{aligned}
		\|I_{22}\|_{L^2}
		&	\leq C\delta(1+t)^{-1}\epsilon\nu^{-1/p_2}\int_{t/2}^{t-\nu^\theta}(t-s)^{-2(1-\frac{1}{p_2})-\frac{1}{2}}\|u(s)\|_{L^{\frac{p_2}{p_2-1}}}ds\\
		&	\leq C\delta(1+t)^{-1}\epsilon\nu^{-1/p_2}\int_{t-1}^{t-\nu^\theta}(t-s)^{-2(1-\frac{1}{p_2})-\frac{1}{2}}\|u(s)\|_{L^2}^{\frac{3}{2}-\frac{1}{p_2}}\|\nabla^2u(s)\|_{L^2}^{\frac{1}{p_2}-\frac{1}{2}}ds\\
		&	\leq C\delta(1+t)^{-1}\epsilon^2\nu^{-1/p_2+2\theta/p_2-3\theta/2}.\\
	\end{aligned}
\end{equation}
The final step involves estimating the item $I_{23},$
 \begin{equation}\label{eq:36}
	\begin{aligned}
		\|I_{23}\|_{L^2}&=\nu^{-1}\left\|\int_{0}^{t/2}\int_{\mathbb{R}^2}\nabla_{x',y'}\mathbb{G}(x-x',y,t-s;y')\left( u\omega\right) (x',y',s)dx'dy'ds\right\|_{L^2}\\
		&\leq \nu^{-1}\int_{0}^{t/2}\left\|\int_{\mathbb{R}^2}\nabla_{x',y'}\mathbb{G}(x-x',y,t-s;y')\left( u\omega\right) (x',y',s)dx'dy'\right\|_{L^2}ds\\
		&	\leq C\nu^{-1}\int_{0}^{t/2}\|\nabla_{x',y'}\mathbb{G}(x-\cdot,y,t-s;\cdot)\|_{L^{5/3}}\|\left( u\omega\right) (s)\|_{L^{\frac{10}{9}}}ds\\
		&	\leq C\nu^{-3/5}\int_{0}^{t/2}(t-s)^{-\frac{4}{5}-\frac{1}{2}}\|\left( u\omega\right) (s)\|_{L^{\frac{10}{9}}}ds\\
		&	\leq C\nu^{-3/5}t^{-\frac{13}{10}}\int_{0}^{t/2}\| \omega(s)\|_{L^2}\|u(s)\|_{L^{2}}^{\frac{9}{10}}\|\nabla^2u(s)\|_{L^{2}}^{\frac{1}{10}}ds\\
		&	\leq C\delta\epsilon^2\nu^{-3/5}t^{-\frac{13}{10}}\int_{0}^{t/2}(1+s)^{-1}ds\\
			&	\leq C\delta\epsilon^2\nu^{-3/4}t^{-1},\\
	\end{aligned}
\end{equation}
where we have used the $t\geq 2\nu^{\frac{1}{2}}$ and the value $\nu$ sufficiently small.
We proceed to establish the boundedness of the element $ I_{23}$
 \begin{equation}\label{eq:37}
	\begin{aligned}
		\|I_{23}\|_{L^2}
		&	\leq \nu^{-1}\int_{0}^{t/2}\|\nabla_{x',y'}\mathbb{G}(x-\cdot,y,t-s;\cdot)\|_{L^{\frac{4}{3}}}\|\left( u\omega\right) (s)\|_{L^{\frac{4}{3}}}ds\\
		&	\leq C\nu^{-1}\int_{0}^{t/2}(t-s)^{-\frac{1}{4}-\frac{1}{2}}\left(1+\frac{1}{12} \nu^{-2} t^2\right)^{-\frac{1}{8}}\|\left( u\omega\right) (s)\|_{L^{\frac{4}{3}}}ds\\
		&	\leq C\nu^{-3/4}\int_{0}^{t/2}(t-s)^{-1}\| \omega(s)\|_{L^2}\|u(s)\|_{L^{4}}ds\\
		&	\leq C\nu^{-3/4}t^{-1}\int_{0}^{t/2}\| \omega(s)\|_{L^2}\|u(s)\|_{L^{2}}^{\frac{3}{4}}\|\nabla^2u(s)\|_{L^{2}}^{\frac{1}{4}}ds\\
		&	\leq C\delta\epsilon^2\nu^{-3/4}t^{-1}\int_{0}^{t/2}(1+s)^{-1}ds\\
		&	\leq C\delta\epsilon^2\nu^{-3/4}.\\
	\end{aligned}
\end{equation}
The combination of equations \eqref{eq:36} and \eqref{eq:37} yields
 \begin{equation}\label{eq:38}
	\begin{aligned}
		\|I_{23}\|_{L^2}\leq C\delta\epsilon^2\nu^{-3/4}(1+t)^{-1}.\\
			\end{aligned}
	\end{equation}
If  we choose 
$$p_1=\frac{10}{9}, \quad p_2=\frac{5}{3}, \quad\theta=\frac{1}{2},$$
and $\delta$  is suitably selected and $c_0$ is sufficiently small, meaning
$$C_3\leq\frac{\delta}{2}, \quad \epsilon\leq c_0\nu^{3/4}.$$
Combining the \eqref{eq:29}, \eqref{eq:30}, \eqref{eq:33}, \eqref{eq:35} and \eqref{eq:38}, we have 
\begin{equation}
	\begin{aligned}
		\|\omega(t)\|_{L^2}
		&\leq \|I_1||_{L^2}+\|I_2||_{L^2}\leq\|I_1||_{L^2}+\|I_{21}||_{L^2}+\|I_{22}||_{L^2}+\|I_{23}||_{L^2}\\
		&\leq C_3(1+t)^{-1}\epsilon+C\delta(1+t)^{-1}\epsilon^2\nu^{-1/p_1+2\theta/p_1-3\theta/2}\\
		&+C\delta(1+t)^{-1}\epsilon^2\nu^{-1/p_2+2\theta/p_2-3\theta/2}+C\delta\epsilon^2\nu^{-3/4}(1+t)^{-1}\\
		&\leq \frac{\delta}{2}(1+t)^{-1}\epsilon.
	\end{aligned}
\end{equation}
This completes the proof of the Proposition \ref{pro4.2}.
\end{proof}
Now we give the proof of Theorem \ref{thm1}.

According to the Proposition \ref{pro4.1}, Proposition \ref{pro4.2}, it can be inferred that if 	$$\mathbf{H}(t):||\omega(t)||_{L^2}\leq \delta(1+t)^{-1}\epsilon,$$ and the initial data satisfies \eqref{eq:16}, then we have
	$$\mathbf{C}(t):	||\omega(t)||_{L^2}\leq\frac{\delta}{2} (1+t)^{-1}\epsilon$$
for	$t\in[0,\infty).$
	By applying Lemma \ref{lem2.1}, we obtain the global stability estimates.

\textbf{Conflict of Interest}\quad The authors declared that they have no conflict of interest.
\section{Acknowledgment}
 The authors thank to Prof. Shijin Deng and Binbin Shi  for suggesting this problem and many valuable discussions. The authors are supported by National Nature Science Foundation of China 12271357 and 12161141004 and Shanghai Science and Technology Innovation Action
Plan No. 21JC1403600.


\begin{thebibliography}{99}
\bibitem{Albritton.2022} D. Albritton, R. Beekie, M. Novack, Enhanced dissipation and Hörmander’s hypoellipticity, Journal of Functional Analysis. 283 (3) (2022) 109522.

\bibitem{Bollob.1993}	B. Bollob$ \acute{a}$s, W. Fulton, F. Kirwan, P. Sarnak, B. Simon, B. Totaro, Fourier Integrals in Classical Analysis, Second Edition, Cambridge tracts in mathematics 1993.

\bibitem{Bernoff.1994}A.J. Bernoff, J.F. Lingevitch, Rapid relaxation of an axisymmetric vortex, Phys Fluids. 6 (1994) 3717–3723.

\bibitem{Bedrossian.2015}J. Bedrossian and N. Masmoudi, Inviscid damping and the asymptotic stability of planar shear flow in the 2D Euler equations, Publ. Math. Inst. Hautes ´Etudes Sci. 122 (2015) 195-300.

\bibitem{Bedrossian.2019}J. Bedrossian, P.  Germain, N. Masmoudi,  Stability of the Couette flow at high
	Reynolds number in 2D and 3D, Bull. Am. Math. Soc. 56 (3) (2019) 373–414.
	
\bibitem{Bedrossian.2020}J. Bedrossian, P. Germain,  N. Masmoudi,  Dynamics near the subcritical transition of the 3D Couette flow I: Below threshold case, Mem. Amer. Math. Soc. 266 (2020) no. 1294.

\bibitem{Bedrossian.2022}J. Bedrossian, P. Germain,  N. Masmoudi,  Dynamics near the subcritical transition of the 3D Couette flow II: Above  threshold case, Mem. Amer. Math. Soc. 279 (2022) no. 1377.

\bibitem{Bedrossian.2017}J. Bedrossian, P. Germain,  N. Masmoudi, On the stability threshold for the 3D Couette flow in Sobolev regularity, Ann, Math. 185 (2) (2017) 541–608.

\bibitem{Bedrossian.2016}J. Bedrossian, N.  Masmoudi, V. Vicol, Enhanced dissipation and inviscid damping in the inviscid limit of the Navier-Stokes equations near the two dimensional Couette flow, Arch. Ration. Mech. Anal. 219 (3) (2016) 1087–1159.

\bibitem{Bedrossian.2018}J. Bedrossian,  F. Wang, V. Vicol,  The Sobolev stability threshold for 2D shear flows near Couette, J. Nonlinear Sci. 28 (6) (2018) 2051–2075.


\bibitem{Chapman.2002}S.J. Chapman, Subcritical transition in channel flows, J. Fluid Mech. 451 (2002) 35–97.

\bibitem{Zelati.2020} M. Coti Zelati, T.M. Elgindi, K. Widmayer, Enhanced dissipation in the Navier-Stokes equations near the Poiseuille flow, Commun. Math. Phys. 378 (2) (2020) 987–1010.

\bibitem{Constantin.2008} P. Constantin, A. Kiselev, L. Ryzhik, A. Zlatos, Diffusion and mixing in fluid flow, Ann. Math. 168 (2) (2008) 643–674.

\bibitem{Chen.2024}
Q. Chen, S. Ding, Z. Lin, Z. Zhang, Nonlinear stability for 3-D plane Poiseuille flow in a
finite channel, arXiv:2310.11694v2, 2024.

\bibitem{L.2020}Q. Chen, T. Li, D. Wei, Z. Zhang, Transition threshold for the 2-D Couette flow in a finite channel, Arch. Ration. Mech. Anal. 238 (1) (2020) 125–183.

\bibitem{Chen.2020}Q. Chen, D. Wei, Z. Zhang, Transition threshold  for the 3D Couette flow in a finite channel, arXiv:2006.00721x1, 2020.

\bibitem{Chen.2023} Q. Chen, D. Wei, Z. Zhang, Linear stability of pipe Poiseuille flow at high Reyholds number regime, Communications on Pure and Applied Mathematics. 76 (9) (2023)  1868-1964.

\bibitem{Drazin.1982}
P.G. Drazin, W.H. Reid, Hydrodynamics stability, Cambridge Monographs on 
Mechanics and Applied Mathematics, Cambridge University Press, Cambridge-New York 1982.

\bibitem{ref71}S. Deng, W. Wang and S.-H. Yu, Viscous conservation laws with boundary, SIAM J. Math. Anal. 44 (4) (2012) 2695-2755.

\bibitem{Deng.2023}
S. Deng, B. Shi, W. Wang, 
Suppression of blow up in 3D Keller-Segel model via Couette flow in whole space, arXiv:2311.18590v2, 2023.

\bibitem{Zotto.2023} A. Del Zotto, Enhanced dissipation and transition threshold for the Poiseuille flow in a periodic strip, SIAM J. Math. Anal. 55 (5) (2023) 4410-4424.

\bibitem{Ding.2022}
S. Ding, Z. Lin,  Enhanced dissipation and transition threshold for the 2-D plane Poiseuille flow via resolvent estimate, Journal of Differential Equation. 332 (2022) 404-439.

\bibitem{Ding.2024}
S. Ding, Z. Lin, Stability for the 2D plane poiseuille flow in finite channel, arXiv:2401.00417v2, 2024.
\bibitem{Duguet.2010}Y. Duguet, L. Brandt, B.  Larsson, Towards minimal perturbations in transitional
plane Couette flow, Phys, Rev. E(3). 82 (2) (2010) 026316, 13.


\bibitem{Hörmander.1967}L. Hörmander, Hypoelliptic second order differential equations, Acta Math. 119 (1967) 147–171.

\bibitem{ref57}
L.N. Howard, Note on a paper of John W. Miles, J. Fluid Mech. 10 (1961) 509--512.
\bibitem{Ionescu.2019}
A.D. Ionescu and H. Jia. Inviscid damping near the Couette flow in a channel. Communications in
Mathematical Physics. 374 (2020) 2015–2096.

\bibitem{Kelvin.1887}
L. Kelvin, Stability of fluid motion-rectilinear motion of viscous fluid between two
parallel plates, Phil. Mag. 24 (1887) 188–196.


\bibitem{Liu.1997} T. Liu, Pointwise convergence to shock waves for viscous conservation laws, Comm. Pure Appl. Math. 50 (11) (1997)  1113-1182.

\bibitem{Liu.2017} T.  Liu and H. Wang, Viscous scalar rarefaction waves, SIAM J. Math. Anal. 49 (3) (2017) 2061-2100.

\bibitem{Liu.2010} T.  Liu and S.-H. Yu, Viscous rarefaction waves, Bull. Inst. Math. Acad. Sin. (NS). 5 (2) (2010) 123-179.

\bibitem{Liu.2009}T.  Liu and Y. Zeng, Time-asymptotic behavior of wave propagation around a viscous shock profile, Comm. Math. Phys. 290 (1) (2009) 23-82.

\bibitem{Liu.1998}T. Liu, W. Wang, The pointwise estimates of diffusion wave for the Navier-Stokes systems in odd multi-dimensions,
Comm. Math. Phys. 196 (1) (1998)  145-173.

\bibitem{Lundbladh.1994}
A. Lundbladh, D. Henningson, S. Reddy, Threshold Amplitudes for Transition in Channel Flows, in Transition, pp. 309–318. Springer, New York 1994.

\bibitem{Li.2022}H. Li, N. Masmoudi, and W. Zhao, Asymptotic stability of two-dimensional Couette flow in a viscous fluid, arXiv:2208.14898, 2022.

\bibitem{Latini.2001}M. Latini, A.J. Bernoff, Transient anomalous diffusion in Poiseuille flow, J. Fluid Mech. 441 (2001)
399–411.

\bibitem{ref45}T. Li, D. Wei, Z. Zhang, Pseudospectral bound and transition threshold for the 3D Kolmogorov flow, Commun. Pure
Appl. Math. 73 (3) (2020) 465–557.

\bibitem{Marcus.1977}
 P.S. Marcus and W.H. Press, On Green’s functions for small disturbances of plane Couette flow, J. Fluid Mech. 79 (1977) 525-534.

\bibitem{Masmoudi.2022}N. Masmoudi, W. Zhao, Stability threshold of two-dimensional Couette flow in Sobolev spaces, Ann.Inst. Henri Poincaré, Anal. Non Linéaire 39 (2) (2022) 245–325.

\bibitem{Masmoudi.2020} N. Masmoudi and W. Zhao, Enhanced dissipation for the 2D Couette flow in critical space, Communications in Partial Differential Equations. 45 (12) (2020) 1682-1701.

\bibitem{Zhao.2020}
N. Masmoudi and W. Zhao, Nonlinear inviscid damping for a class of monotone shear flows in finite
channel, arXiv:2001. 08564, 2020.

\bibitem{S.Orszag.1980}
S. Orszag, L. Kells, Transition to turbulence in plane Poiseuille and plane Couette flow, J. Fluid Mech. 96 (1980)
159–205.


\bibitem{ref56}
W. Orr, The stability or instability of steady motions of a perfect liquid and of a
viscous liquid. Part I: A perfect liquid. Proc. R. Irish Acad. Sec. A: Math. Phys.
Sci. 27 (1907) 9–68.

\bibitem{Pazy.1983}A. Pazy, Semigroups of Linear Operators and Applications to Partial Differential Equations, Applied Mathematical
Sciences, vol. 44, Springer, New York 1983.

\bibitem{O. Reynolds.1883}
O. Reynolds, An experimental investigation of the circumstances which determine whether 
the motion of water shall be direct or sinuous and of the law of resistance in parallel 
channels, Proc. R. Soc. Lond. 174 (1883) 935-982.

\bibitem{Rhines.1983}
P.B. Rhines, W.R. Young, How rapidly is a passive scalar mixed within closed streamlines? J. Fluid
Mech. 133 (1983) 133–145.

\bibitem{Reddy.1998}S. Reddy, P. Schmid, J. Baggett, D.  Henningson,  On stability of streamwise
streaks and transition thresholds in plane channel flows, J. Fluid Mech. 365 (1998) 269–303.

\bibitem{Schmid.2001}
 P.J. Schmid and  D.S. Henningson, Stability and transition in shear flows, Applied Mathematical Sciences, vol. 142, Springer-Verlag, New York 2001.
 

\bibitem{Synge.1933}J.L. Synge, “The stability of heterogeneous fluids,” Trans. R. Soc. Canada 27 (1993) 1–18.

\bibitem{Tao.2006}
T. Tao, Nonlinear Dispersive Equations: Local and Global Analysis, American Mathematical Society, Providence, RI, 2006.

\bibitem{Trefethen.1993}
L, Trefethen, A. Trefethen, S. Reddy, T, Driscoll,  Hydrodynamic stability
without eigenvalues, Science. 261 (5121) (1993) 578–584.

\bibitem{Thomson.1887}
W. Thomson, Stability of fluid motion-rectilinear motion of viscous fluid between two parallel plates,
Philos. Mag. 24 (1887) 188–196.

\bibitem{Taylor.1931}G.I. Taylor, “Effect of variation in density on the stability of superposed streams of fluid,” Proc. R. Soc. London, Ser. A132 (820) (1931) 499–523.

\bibitem{Wei.2019}D. Wei, Diffusion and mixing in fluid flow via the resolvent estimate, Sci. China Math. (2019) 1-12.

\bibitem{Wei.2021}D. Wei, Z. Zhang, Transition threshold for the 3D Couette flow in Sobolev space. Comm. Pure Appl. Math. 74 (11) (2021) 2398–2479.

\bibitem{Wang.2022}Y. Wang, C. Xie, Uniform structural stability of Hagen-Poiseuille flows in a pipe, Communications in Mathematical Physics. 393 (3) (2022) 1347–1410.

\bibitem{Yaglom.2012}A. Yaglom,  Hydrodynamic Instability and Transition to Turbulence, Fluid Mech. Appl. 100. Springer, New York 2012.

























\end{thebibliography}
\end{document}